\theoremstyle{plain}
\newtheorem{theorem}{Theorem}
\newtheorem{lemma}{Lemma}
\newtheorem{proposition}{Proposition}
\newtheorem{corollary}{Corollary}
\newtheorem{remark}{Remark}
\numberwithin{equation}{section}
\numberwithin{lemma}{section}
\numberwithin{proposition}{section}
\numberwithin{corollary}{section}
\numberwithin{remark}{section}
\newcommand{\dd}{\mathop{}\!\mathrm{d}}
\crefname{theorem}{Theorem}{Theorem}
\crefname{lemma}{Lemma}{Lemma}
\crefname{proposition}{Proposition}{Proposition}
\crefname{conjecture}{conjecture}{conjecture}
\crefname{remark}{Remark}{Remark}
\crefname{equation}{}{}
\crefname{section}{Section}{Section}
\crefname{figure}{Figure}{Figure}
\begin{document}

\title[Modified Bakry-\'Emery $\Gamma_2$ criterion and Tsallis entropy]{A modified Bakry-\'Emery $\Gamma_2$ criterion inequality and the monotonicity of the Tsallis entropy}
\author{Xiaohan Cai}
\address{School of Mathematical Sciences, Shanghai Jiao Tong University, Shanghai 200240, China}
\email{xiaohancai@sjtu.edu.cn}
\author{Xiaodong Wang}
\address{Department of Mathematics,
Michigan State University, East Lansing, MI 48824}
\email{xwang@math.msu.edu}

\thanks{}
\date{}
\begin{abstract}
The Bakry-\'Emery $\Gamma_2$ criterion inequality provides a method for establishing the logarithmic Sobolev inequality. We prove a one-parameter family of weighted Bakry-\'Emery $\Gamma_2$ criterion inequalities which in the limit case yields the improved constant due to Ji \cite{Ji24}.  Furthermore, we establish a modified weighted $\Gamma_2$ criterion inequality which could be interpreted as a monotonicity of the Tsallis entropy under the heat flow and yields a family of sharp Sobolev inequalities.



\end{abstract}
\maketitle
\section{Introduction}
	The sharp constants of Sobolev inequalities on Riemannian manifolds have been intensively studied from various perspectives over the last several decades.
    Among these inequalities, the logarithmic Sobolev inequality, introduced by Gross \cite{Gro75}, plays a vital role at the interface of analysis, geometry and probability. It is impossible to mention all important work in this field. We recommend \cite{Rot81,Bec99,BL00,Per02,Led06,Bak06,BE06,Gro06, Bre22} and the references therein.

    The logarithmic Sobolev inequality on compact Riemannian manifolds states as:
    \begin{align}\label{ineq. log Sobolev ineq}
        \fint_M |\nabla u|^2\geq
        \frac{C}{2}\left(
        \fint_M u^2\log u^2
        -\left(\fint_M u^2\right)
        \log\left(\fint_M u^2\right)
        \right),\quad \forall\, 0<u\in C^{\infty}(M).
    \end{align}

    The optimal constant of \cref{ineq. log Sobolev ineq}, denoted by $\alpha_M$, refers to the largest constant $C$ such that the inequality \cref{ineq. log Sobolev ineq} holds.
     Historically, Bakry and \'{E}mery \cite{BE06} proved that for $(M^n,g)$ with $Ric_g\geq n-1$, 
     \begin{align*}
         \alpha_M\geq  n.
     \end{align*}
     Later, Rothaus \cite[Theorem 1]{Rot86} obtained an improved lower bound for $\alpha_M$ under the same curvature hypothesis:
     \begin{align}\label{ineq. Rothaus}
         \alpha_M \geq 
         \frac{4n}{(n+1)^2}\lambda_1
         +\frac{(n-1)^2}{(n+1)^2}n
         ,
     \end{align}
     where $\lambda_1$ is the first eigenvalue of $-\Delta$ on $(M^n,g)$. The proof  consists of studying a critical equation of \cref{ineq. log Sobolev ineq} and a delicate choice of test functions.
     
    
    Another interesting way toward \cref{ineq. log Sobolev ineq} is the  Bakry-\'{E}mery $\Gamma_2$ criterion\footnote{The terminology $\Gamma_2$ comes from the Carr\'e du champ calculus systematically studied by Bakry and his collaborators, see \cite{Led00}. For the Laplace operator on a manifold $(M^n,g)$, $\Gamma_2(f,f)=|\nabla^2 f|^2+Ric(\nabla f,\nabla f)$.}. 
    Consider the  following inequality
    \begin{align}\label{ineq. Gamma2 criterion}
        \int_M f(|\nabla^2\log f|^2+Ric(\nabla \log f,\nabla \log f))
        \geq 
        C\int_M f|\nabla \log f|^2,\quad 
        \forall\, 0<f\in C^{\infty}(M).
    \end{align}
    The Bakry-\'{E}mery $\Gamma_2$ criterion \cite{BE06} states that if \cref{ineq. Gamma2 criterion} holds with $C=\Lambda$, then the following also holds with $C=\Lambda$:
    \begin{align}\label{ineq. equivalent log Sob}
        \fint_M f|\nabla\log f|^2
        \geq 
        2C\left(
        \fint_M f\log f
        -\left(\fint_M f\right)
        \log\left(\fint_M f\right)
        \right),\quad 
        \forall \, 0<f\in C^{\infty}(M),
    \end{align}
     which    
    is equivalent to, by substituting $u=\sqrt{f}$, the logarithmic Sobolev inequality \cref{ineq. log Sobolev ineq}:
    \begin{align*}
        \fint_M |\nabla u|^2\geq
        \frac{C}{2}\left(
        \fint_M u^2\log u^2
        -\left(\fint_M u^2\right)
        \log\left(\fint_M u^2\right)
        \right),\quad \forall\, 0<u\in C^{\infty}(M).
    \end{align*}
    In other words, if we let $\Lambda_M$ be the largest constant such that \cref{ineq. Gamma2 criterion} holds with $C=\Lambda_M$, then the criterion yields
    \begin{align}\label{ineq. Bakry Emery criterion}
        \alpha_M\geq \Lambda_M.
    \end{align}
    To get their $\Gamma_2$ criterion, Bakry-\'Emery consider how the Shannon entropy of a positive function $f$:
    \begin{align}\label{eq. Shannon entropy}
        S(f):=-\int_Mf\log f
    \end{align}
    evolve under the heat flow $\partial_t f=\Delta f$. Then 
    \cref{ineq. Gamma2 criterion} could be interpreted as an inequality involving the first and second order derivatives of the Shannon entropy \cref{eq. Shannon entropy}. 
    After integrating \cref{ineq. Gamma2 criterion} over $t\in[0,+\infty)$, one gets \cref{ineq. equivalent log Sob}.  We shall present a  detailed illustration at the end of \cref{sec. proof of thm}.
    
    
    Based on the Bakry-\'Emery criterion, a lower bound of $\Lambda_M$ is of independent interest. Bakry and \'{E}mery \cite{BE06} actually proved the stronger result that $\Lambda_M\geq  n$ if $(M^n,g)$ satisfies $Ric_g\geq n-1$. This is optimal as $\Lambda_{\mathbb{S}^n}=  n$.

     In the recent seminal work of Guillen-Sylvestre \cite{GS25} proving the existence of global smooth solutions for the Landau equation with Coulomb potential,  the inequality \cref{ineq. Gamma2 criterion} with an improved constant for even functions on $\mathbb{S}^3$ plays a crucial role. Afterward, Ji \cite[Theorem 2.3]{Ji24} proved the following general result.
    \begin{theorem}[Ji, \cite{Ji24}]\label{thm. Ji}
        Let $(M^n,g)$ be a compact Riemannian manifold with $Ric\geq n-1$. Denote $\lambda_1$ as the first eigenvalue of $-\Delta $ on $(M^n,g)$. Then for any $0<f\in C^{\infty}(M)$,
        \begin{align*}
            \int_M f(|\nabla^2\log f|^2+Ric(\nabla \log f,\nabla \log f))
        \geq 
        \left(
        \frac{4n-1}{n(n+2)}\lambda_1
            +\frac{(n-1)^2}{n(n+2)}n
            \right)
            \int_M f|\nabla \log f|^2.
        \end{align*}
        That is,
        \begin{align*}
            \Lambda_M\geq \frac{4n-1}{n(n+2)}\lambda_1
            +\frac{(n-1)^2}{n(n+2)}n.
        \end{align*}
    \end{theorem}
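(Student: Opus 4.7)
The plan is to combine a Bochner-derived integrated identity, an algebraic Hessian inequality with a free parameter $\mu$, and the spectral inequality $\int_M(\Delta\phi)^2 \geq \lambda_1 \int_M|\nabla\phi|^2$ applied to $\phi=\sqrt{f}$. Forming a suitable convex combination will cause the two spurious quartic terms to cancel, leaving exactly Ji's constant as the coefficient of $\int f|\nabla h|^2$.

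Setting $h=\log f$, denote $J:=\int f|\nabla h|^2$, $K:=\int f|\nabla h|^4$, and $L:=\int f\nabla^2 h(\nabla h,\nabla h)$. Two elementary IBP identities will be used repeatedly: $\int f\Delta h=-J$ and $\int f\Delta h|\nabla h|^2=-K-2L$. First, applying Bochner's identity $|\nabla^2 h|^2+\mathrm{Ric}(\nabla h,\nabla h)=\tfrac12\Delta|\nabla h|^2-\langle\nabla h,\nabla\Delta h\rangle$, multiplying by $f$, and integrating by parts twice produces the identity
\[
I := \int f\bigl(|\nabla^2 h|^2+\mathrm{Ric}(\nabla h,\nabla h)\bigr) = \int f(\Delta h)^2 - K - 3L.
\]
Second, for a free parameter $\mu$, the non-negativity of $\bigl|\nabla^2 h-\tfrac{\Delta h}{n}g+\mu\bigl(\nabla h\otimes\nabla h-\tfrac{|\nabla h|^2}{n}g\bigr)\bigr|^2$, combined with $\mathrm{Ric}\geq(n-1)g$ and integration against $f$, gives
\[
I \geq \tfrac1n\int f(\Delta h)^2 + (n-1)J - \tfrac{2\mu(n+2)}{n}L - \tfrac{\mu(2+\mu(n-1))}{n}K.
\]
Third, applying the spectral bound to $\phi=\sqrt{f}$ and using $\Delta\sqrt{f}=\tfrac{\sqrt{f}}{2}\Delta h+\tfrac{\sqrt{f}}{4}|\nabla h|^2$ yields
\[
\int f(\Delta h)^2 \geq \lambda_1 J + \tfrac34 K + 2L.
\]

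Next, form $\alpha$ times the identity plus $(1-\alpha)$ times the algebraic inequality, and use the spectral inequality to lower-bound the resulting $\int f(\Delta h)^2$ term. The coefficient of $J$ then becomes $a\lambda_1+(1-\alpha)(n-1)$ with $a=\alpha+(1-\alpha)/n$, while the coefficients of $K$ and $L$ depend algebraically on $(\alpha,\mu)$. Imposing that both vanish gives two equations in $(\alpha,\mu)$; after eliminating $\mu$ from the $L$-equation via $\mu=\tfrac{2-(n+2)\alpha}{2(n+2)(1-\alpha)}$, the $K$-equation simplifies to $n^2[(n+2)\alpha-3]=0$, forcing $\alpha=\tfrac{3}{n+2}$ and $\mu=-\tfrac{1}{2(n-1)}$. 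Substituting these values, the surviving $J$-coefficient equals $\tfrac{(4n-1)\lambda_1+n(n-1)^2}{n(n+2)}$, exactly Ji's constant.

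The main obstacle is locating the correct pair $(\alpha,\mu)$: a naive single-parameter approach (using only the algebraic inequality with $\mu=1/(n+2)$ to eliminate $L$ and bounding $\int f(\Delta h)^2$ by the spectral inequality) produces only $\tfrac{\lambda_1}{n}+(n-1)$, which agrees with Ji's constant on $\mathbb{S}^n$ (where $\lambda_1=n$) but is strictly weaker whenever $\lambda_1>n$. Exploiting the \emph{exactness} of the Bochner identity, rather than just its lower bound from Cauchy--Schwarz, provides the second degree of freedom needed to simultaneously annihilate both the $K$- and $L$-coefficients; the appearance of $n+2$ in the denominator emerges naturally from this two-parameter elimination.
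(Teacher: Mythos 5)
Your proposal is correct: I verified the integrated Bochner identity $I=\int f(\Delta h)^2-K-3L$, the $\mu$-parametrized trace-free Hessian inequality, the spectral bound $\int f(\Delta h)^2\geq\lambda_1 J+\tfrac34K+2L$ for $\phi=\sqrt f$, and the elimination: the $K$-equation does reduce to $n^2\bigl((n+2)\alpha-3\bigr)=0$, giving $\alpha=\tfrac{3}{n+2}$, $\mu=-\tfrac{1}{2(n-1)}$, with $1-\alpha=\tfrac{n-1}{n+2}>0$ and $a=\tfrac{4n-1}{n(n+2)}>0$ so that both the inequality-weighting and the spectral substitution are legitimate, and the surviving $J$-coefficient is exactly Ji's constant. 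Your route is genuinely different from the paper's: the paper first proves the one-parameter family of weighted inequalities for powers $v^s$ (\cref{thm. CW}), using the substitution $u=v^{(s+2)/2}$, the integral identities of \cref{lem. integral identity}, and a single square-completion parameter $\alpha$, and then recovers Ji's theorem in \cref{cor. recover Ji} by applying that family to $1-\tfrac{v}{s}$ with $v=-\log f$ and letting $s\to-\infty$ (under which $(1-\tfrac{v}{s})^s\to f$ and $u\to\sqrt f$). You instead work directly with $h=\log f$ and trade the limiting argument for a second free parameter: rather than fixing the cross-term coefficient by the integral identities and tuning only $\alpha$, you keep $\mu$ free in the pointwise square and impose two vanishing conditions on the $K$- and $L$-coefficients. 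The core mechanism (trace-free Hessian decomposition, Bochner, spectral gap applied to $\sqrt f$) is the same in both arguments, so what each approach buys is different scope: the paper's detour yields the whole weighted family \cref{ineq. weighted BE Gamma2} of independent interest, while your direct two-parameter elimination gives a shorter, self-contained proof of Ji's inequality alone and makes transparent (via your closing observation about the naive choice $\mu=\tfrac{1}{n+2}$) exactly where the improvement over the constant $\tfrac{\lambda_1}{n}+(n-1)$ comes from.
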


    Our first result is a one-parameter family of weighted Bakry-\'{E}mery $\Gamma_2$ criterion inequality.
    
 \begin{theorem}\label{thm. CW}
     Let $(M^n,g)$ ($n\geq 2$) be a compact Riemannian manifold  with $Ric\geq n-1$.
     Denote $\lambda_1$ as the first eigenvalue of $-\Delta $ on $(M^n,g)$.
    Then for $s\in(-\infty,0]\cup\left[\frac{4(n+2)}{4n-1},+\infty\right)$ and $0<v\in C^{\infty}(M)$ , there holds
    \begin{align}\label{ineq. weighted BE Gamma2}
        \int_M v^s(|\nabla^2 v|^2+Ric(\nabla v,\nabla v))
        \geq 
        \left(\left(1-\frac{(n-1)^2s}{(n+2)(ns-4)}\right)\lambda_1
        +\frac{(n-1)^2ns}{(n+2)(ns-4)}
        \right)
        \int_M v^s|\nabla v|^2.
    \end{align}
 \end{theorem}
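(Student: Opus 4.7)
The plan is to follow a Bochner-then-integration-by-parts strategy in the spirit of Ji's proof of \cref{thm. Ji}, carrying the weight $v^s$ throughout. First, I would apply the Bochner formula
\begin{align*}
    \frac{1}{2}\Delta|\nabla v|^2 = |\nabla^2 v|^2 + \langle\nabla v,\nabla\Delta v\rangle + Ric(\nabla v,\nabla v),
\end{align*}
multiply by $v^s$, integrate, and use the identities $\Delta(v^s) = sv^{s-1}\Delta v + s(s-1)v^{s-2}|\nabla v|^2$ and $\mathrm{div}(v^s\nabla v) = sv^{s-1}|\nabla v|^2 + v^s\Delta v$ in two integration-by-parts to reach
\begin{align*}
    \int_M v^s\bigl(|\nabla^2 v|^2+Ric(\nabla v,\nabla v)\bigr) = A + \frac{3s}{2}B + \frac{s(s-1)}{2}C,
\end{align*}
where $A := \int_M v^s(\Delta v)^2$, $B := \int_M v^{s-1}|\nabla v|^2\Delta v$, and $C := \int_M v^{s-2}|\nabla v|^4$. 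This reduces matters to bounding the above linear combination from below by a constant multiple of $V := \int_M v^s|\nabla v|^2$.

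Second, I would supply two independent lower bounds for that combination. Pointwise, the trace inequality $|\nabla^2 v|^2 \geq (\Delta v)^2/n$ combined with $Ric(\nabla v,\nabla v)\geq (n-1)|\nabla v|^2$ gives $\int_M v^s(|\nabla^2 v|^2+Ric(\nabla v,\nabla v)) \geq A/n + (n-1)V$. To bring in $\lambda_1$, I would apply the spectral estimate $\int_M (\Delta \phi)^2 \geq \lambda_1\int_M|\nabla\phi|^2$ (valid after subtracting the mean of $\phi$) to the test function $\phi := v^{s/2+1}$; since $|\nabla\phi|^2 = \frac{(s+2)^2}{4}v^s|\nabla v|^2$ and $(\Delta\phi)^2$ expands into a linear combination of $A,B,C$, this produces a second scalar inequality of the schematic form $A + sB + \frac{s^2}{4}C \geq \lambda_1 V$.

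Third, taking a one-parameter convex combination of the pointwise estimate with the spectral estimate, and subtracting a multiple of the Bochner identity, I would arrange for the surviving $B$ and $C$ contributions to collect into a nonnegative quadratic form by completion of squares. Optimizing in that parameter is designed to produce the exact constant $\bigl(1-\tfrac{(n-1)^2 s}{(n+2)(ns-4)}\bigr)\lambda_1 + \tfrac{(n-1)^2 ns}{(n+2)(ns-4)}$.

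The main obstacle is this final algebraic step. The integrals $A,B,C$ are not independent (they obey further integration-by-parts identities and the Cauchy-Schwarz bound $B^2\leq A\cdot C$), so writing the residual combination as a sum of squares requires tight bookkeeping. The admissible range $s\in(-\infty,0]\cup[\tfrac{4(n+2)}{4n-1},+\infty)$ should emerge as precisely the set of $s$ for which the discriminant of that quadratic form has the correct sign; as a sanity check, at the threshold $s=4(n+2)/(4n-1)$ the $\lambda_1$-coefficient in the target constant vanishes and the bound collapses to a weighted Bakry--\'Emery estimate with constant $n$, consistent with what one expects at the boundary of applicability.
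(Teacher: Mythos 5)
Your setup is sound and, in fact, your test function $\phi=v^{s/2+1}$ is exactly the substitution $u=v^{(s+2)/2}$ that the paper uses, and your Bochner identity $\int_M v^s(|\nabla^2v|^2+Ric(\nabla v,\nabla v))=A+\tfrac{3s}{2}B+\tfrac{s(s-1)}{2}C$ and the spectral inequality $A+sB+\tfrac{s^2}{4}C\geq\lambda_1V$ are both correct (as is your sanity check that the $\lambda_1$-coefficient vanishes at $s=\tfrac{4(n+2)}{4n-1}$). But the final step, as you describe it, cannot close, and the obstacle is not bookkeeping: once you discard the traceless Hessian via the pointwise bound $|\nabla^2v|^2\geq(\Delta v)^2/n$, the only tools left are scalar relations among $A,B,C$ together with the scalar Cauchy--Schwarz $B^2\leq AC$, and these are provably insufficient to reach the stated constant. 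Concretely, write the target constant as $(1-\tfrac{n-1}{n}\alpha)\lambda_1+(n-1)\alpha$ with $\alpha=\tfrac{n(n-1)s}{(n+2)(ns-4)}$. To produce the $(n-1)\alpha$ part you must take the pointwise estimate with weight $\theta=\alpha$, and to produce the $\lambda_1$ part you must take the spectral estimate with weight $\mu=1-\tfrac{n-1}{n}\alpha$; the residual that must then be a nonnegative quadratic form in $(\Delta v,\,v^{-1}|\nabla v|^2)$ comes out as
\begin{align*}
0\cdot A+\frac{s(s-4)}{2(ns-4)}\,B+c'\,C,
\end{align*}
i.e.\ its $A$-coefficient is forced to be exactly $0$ while its $B$-coefficient is nonzero for all admissible $s\neq 0,4$. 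A form $b'xy+c'y^2$ with $b'\neq0$ is indefinite, and $B^2\leq AC$ cannot rescue it because there is no $A$ left to borrow from. So with this scheme you can only prove the inequality with a strictly smaller constant (or only at $s=4$).

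The missing idea is tensorial: instead of throwing away $|\nabla^2\phi-\tfrac{\Delta\phi}{n}g|^2$, keep an $\alpha$-fraction of $\int|\nabla^2\phi|^2$ in traceless form and use the integration-by-parts identity of \cref{lem. integral identity} to convert the cross terms $\int\langle\nabla^2\phi,\phi^{-1}\dd\phi\otimes\dd\phi\rangle$ and $\int\phi^{-1}|\nabla\phi|^2\Delta\phi$ into the pairing $\int\langle\nabla^2\phi-\tfrac{\Delta\phi}{n}g,\;\phi^{-1}\dd\phi\otimes\dd\phi-\tfrac1n\phi^{-1}|\nabla\phi|^2g\rangle$. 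One then completes the square between these two \emph{traceless} tensors, as in \cref{eq. proof}. The gain over your scalar Cauchy--Schwarz is precisely the identity $|\phi^{-1}\dd\phi\otimes\dd\phi-\tfrac1n\phi^{-1}|\nabla\phi|^2g|^2=\tfrac{n-1}{n}\phi^{-2}|\nabla\phi|^4$ (a factor $\tfrac{n-1}{n}$ rather than $1$), which is exactly what allows the coefficient of $\int\phi^{-2}|\nabla\phi|^4$ to be made to vanish for the paper's choice of $\alpha$ while keeping the coefficients of $\int(\Delta\phi)^2$ and $\int Ric(\nabla\phi,\nabla\phi)$ nonnegative; the admissible range of $s$ is then the condition $0<\alpha\leq\tfrac{n}{n-1}$ (plus a separate limiting argument at $s=-2$, where your $\phi$ degenerates to a constant). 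Your framework is compatible with this repair, but without it the proof does not go through.
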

 As a limiting case, we recover \cref{thm. Ji} (see \cref{cor. recover Ji}). The result could be useful in other situations.  Our proof is direct and simple.
 

 In the second part of this paper, we investigate the monotonicity of a generalized Shannon entropy under the heat flow. In 1988, Tsallis introduced the following entropy \cite{Tsa88}: for $p\in(0,1)\cup(1,+\infty)$ and $0<u\in C^{\infty}(M)$,
\begin{align*}
    T_p(u):=\frac{\int_M u^p-\int_M u}{1-p}.
\end{align*}
It is straightforward to see that as $p\to 1$, Tsallis entropy converges to the Shannon entropy, making it a natural generalization of the latter. Moreover, the Tsallis entropy is also widely used in statistical mechanics, see for example \cite{PP93, Lut03, Tsa09}. We study explicitly how the Tsallis entropy evolves under the heat flow. Its second derivative is given by a modified weighted $\Gamma_2$ integral with an additional term involving $\Delta u$.
 We generalize the Bakry-\'Emery $\Gamma_2$ criterion inequality to this setting. 
 \begin{theorem}\label{thm. modified weighted Gamma2}
    Let $(M^n,g)$ ($n\geq 2$) be a compact Riemannian manifold with $Ric\geq n-1$. 
    Denote $\lambda_1$ as the first eigenvalue of $-\Delta $ on $(M^n,g)$.
    Then for $s\in\left(-\infty,-\frac{2(2n^2+1)}{4n-1}\right]
    \cup[2,+\infty)$ and $0<v\in C^{\infty}(M)$ , there holds
    \begin{align}
        &\int_M v^s(|\nabla^2 v|^2+Ric(\nabla v,\nabla v)-v^{-1}|\nabla v|^2\Delta v)\notag\\
        \geq &
        \left(\left(1-\frac{(n-1)^2(s-2)}{n(n+2)(s+2)}
        \right)\lambda_1
        +\frac{(n-1)^2(s-2)}{(n+2)(s+2)}
        \right)
        \int_M v^s|\nabla v|^2.\label{ineq. interpolated BE Gamma2}
    \end{align}
\end{theorem}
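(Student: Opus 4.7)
The plan is to adapt the approach of Theorem \ref{thm. CW} to accommodate the extra $-v^{-1}|\nabla v|^2\Delta v$ term, using Bochner plus a spectral bound on an auxiliary function, sharpened by a refined Kato estimate.

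First, I apply the Bochner formula $\tfrac12\Delta|\nabla v|^2=|\nabla^2 v|^2+\langle\nabla v,\nabla\Delta v\rangle+Ric(\nabla v,\nabla v)$, multiply by $v^s$, and integrate twice by parts (using $\nabla^2 v(\nabla v,\nabla v)=\tfrac12\langle\nabla|\nabla v|^2,\nabla v\rangle$) to reduce
\[
I := \int_M v^s\bigl(|\nabla^2 v|^2+Ric(\nabla v,\nabla v)-v^{-1}|\nabla v|^2\Delta v\bigr)
\]
to an algebraic identity $I=X+\tfrac{3s-2}{2}Y+\tfrac{s(s-1)}{2}Z$ in three basic integrals $X:=\int v^s(\Delta v)^2$, $Y:=\int v^{s-1}|\nabla v|^2\Delta v$, $Z:=\int v^{s-2}|\nabla v|^4$.

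Next I introduce the auxiliary function $\phi:=v^{(s+2)/2}$, which satisfies $|\nabla\phi|^2=(\tfrac{s+2}{2})^2 v^s|\nabla v|^2$ and $\Delta\phi=\tfrac{s+2}{2}(v^{s/2}\Delta v+\tfrac{s}{2}v^{s/2-1}|\nabla v|^2)$. Setting $W:=\int v^s|\nabla v|^2$, the Rayleigh inequality $\int(\Delta\phi)^2\geq\lambda_1\int|\nabla\phi|^2$ yields a spectral bound $(S):X+sY+\tfrac{s^2}{4}Z\geq\lambda_1 W$; and the pointwise Lichnerowicz estimate $|\nabla^2 v|^2+Ric(\nabla v,\nabla v)\geq\tfrac{(\Delta v)^2}{n}+(n-1)|\nabla v|^2$, combined with the identity above, gives a curvature bound $(C):\tfrac{n-1}{n}X+\tfrac{3s}{2}Y+\tfrac{s(s-1)}{2}Z\geq(n-1)W$. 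A non-negative combination $\alpha\cdot(C)+\beta\cdot(S)$ with $(\alpha,\beta)$ chosen so the $X,Y,Z$-coefficients match those of $I$ produces $I\geq(\alpha(n-1)+\beta\lambda_1)W$; explicit computation gives $\alpha=\tfrac{n(s-2)}{s(n+2)}$, $\beta=\tfrac{3s+2n-2}{s(n+2)}$, and one verifies that $\alpha(n-1)+\beta\lambda_1$ matches the claimed sharp $K$ on the sphere case $\lambda_1=n$, with a $Z$-residual whose coefficient is $\tfrac{s^2+2(n-1)s-4n}{4(n+2)}$.

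The main obstacle is twofold: for $\lambda_1>n$, this linear combination of $(S)$ and $(C)$ alone yields a constant strictly weaker than $K$; and non-negativity of the $Z$-residual only covers $s\in(-\infty,-2n]\cup[2,\infty)$, which is narrower than the claimed range $s\in(-\infty,-\tfrac{2(2n^2+1)}{4n-1}]\cup[2,\infty)$. To close both gaps, I would replace the Lichnerowicz bound in $(C)$ by the refined Kato inequality
\[
|\nabla^2 v|^2\geq\tfrac{(\Delta v)^2}{n-1}+\tfrac{n}{n-1}|\nabla|\nabla v||^2-\tfrac{2\Delta v\,\nabla^2 v(\nabla v,\nabla v)}{(n-1)|\nabla v|^2},
\]
whose signed last term is absorbed via integration by parts (again using $\nabla^2 v(\nabla v,\nabla v)=\tfrac12\langle\nabla|\nabla v|^2,\nabla v\rangle$); this furnishes a third admissible linear inequality whose combination with $(S)$ is designed to yield exactly the claimed $K=\bigl(1-\tfrac{(n-1)^2(s-2)}{n(n+2)(s+2)}\bigr)\lambda_1+\tfrac{(n-1)^2(s-2)}{(n+2)(s+2)}$ and to extend the interval of validity, with the threshold $-\tfrac{2(2n^2+1)}{4n-1}$ emerging as a discriminant of a quadratic in $s$ with coefficients involving $(2n^2+1)$ and $(4n-1)$. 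Executing this three-way optimization and verifying that the constants and range come out precisely as stated is the most delicate part of the argument.
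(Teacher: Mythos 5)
Your two-inequality scheme is set up correctly as far as it goes: the identity $I=X+\tfrac{3s-2}{2}Y+\tfrac{s(s-1)}{2}Z$, the spectral bound $(S)$ via $\phi=v^{(s+2)/2}$, the curvature bound $(C)$, and the matching coefficients $\alpha=\tfrac{n(s-2)}{s(n+2)}$, $\beta=\tfrac{3s+2n-2}{s(n+2)}$ with $Z$-residual $\tfrac{s^2+2(n-1)s-4n}{4(n+2)}$ all check out. But, as you acknowledge, what this actually proves is a strictly weaker statement: the coefficient of $\lambda_1$ comes out as $\tfrac{3s+2n-2}{s(n+2)}$, which falls short of the claimed $1-\tfrac{(n-1)^2(s-2)}{n(n+2)(s+2)}$ by $\tfrac{(n-1)(s-2)(s+2n)}{ns(n+2)(s+2)}\geq 0$ (so the bound is strictly weaker whenever $\lambda_1>n$ and $s\neq 2$), and the residual is nonnegative only on $(-\infty,-2n]\cup[2,\infty)$, which misses the portion $\left(-2n,-\tfrac{2(2n^2+1)}{4n-1}\right]$ of the claimed range. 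The proposed repair does not close either gap: your refined Kato inequality is valid pointwise away from critical points of $v$, but after multiplying by $v^s$ and integrating, its middle term produces $\int v^s|\nabla|\nabla v||^2$ and its last term produces $\int v^s\,\Delta v\,\langle\nabla\log|\nabla v|^2,\nabla v\rangle$, and neither reduces to the basic integrals $X,Y,Z,W$ by the integration by parts you indicate (the $|\nabla v|^2$ in the denominator blocks it). So the ``third admissible linear inequality'' is not available in your framework, and the claim that the threshold $-\tfrac{2(2n^2+1)}{4n-1}$ emerges as a discriminant is asserted rather than derived. This is a genuine gap, not a routine verification.

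The idea you are missing is to retain the traceless Hessian rather than discarding it through the pointwise bound $|\nabla^2 v|^2\geq(\Delta v)^2/n$. The paper substitutes $u=v^{(s+2)/2}$, writes the weighted $\Gamma_2$ integral as $\alpha\int|\nabla^2u-\tfrac{\Delta u}{n}g|^2$ plus a cross term against the traceless tensor $\tfrac{du\otimes du}{u}-\tfrac1n\tfrac{|\nabla u|^2}{u}g$ (this cross term \emph{is} computable by integration by parts, \cref{lem. integral identity}, and $|\tfrac{du\otimes du}{u}-\tfrac1n\tfrac{|\nabla u|^2}{u}g|^2=\tfrac{n-1}{n}u^{-2}|\nabla u|^4$), and completes the square. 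Choosing $\alpha=\tfrac{(n-1)(s-2)}{(n+2)(s+2)}$ makes the leftover coefficient of $\int u^{-2}|\nabla u|^4$ vanish identically --- no residual sign condition at all --- and the two constraints $\alpha>0$ and $\alpha\leq\tfrac{n}{n-1}$ (the latter needed to apply $\int(\Delta u)^2\geq\lambda_1\int|\nabla u|^2$ with a nonnegative coefficient) are exactly what produce the claimed constant and the threshold $-\tfrac{2(2n^2+1)}{4n-1}$.
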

In terms of the Tsallis entropy under the heat flow, \cref{thm. modified weighted Gamma2} yields a differential inequality that relates its first derivative and its second derivative.
\begin{theorem}\label{thm. ODE of T entropy}
    Let $(M^n,g)$ ($n\geq 2$) be a compact Riemannian manifold with  $Ric\geq n-1$.
    Denote $\lambda_1$ as the first eigenvalue of $-\Delta $ on $(M^n,g)$.
    Then for $p\in\left[\frac{2(n-1)^2}{2n^2+1},1\right)\cup(1,2]$ and under the heat flow $\partial_t u=\Delta u$ with a positive initial data,  we have:
    \begin{align}\label{ineq. in thm ODE of T entropy}
        \frac{\dd^2}{\dd t^2}T_p(u)
        +2\left(\left(
        1-\frac{(n-1)^2}{n(n+2)}\left(\frac{2}{p}-1\right)
        \right)\lambda_1
        +\frac{(n-1)^2}{n+2}\left(\frac{2}{p}-1\right)
        \right)
        \frac{\dd}{\dd t}T_p(u)
        \leq 0.
    \end{align}
\end{theorem}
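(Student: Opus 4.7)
The plan is to derive \cref{thm. ODE of T entropy} as a direct consequence of \cref{thm. modified weighted Gamma2} applied to $v=u^{(p-1)/2}$, equivalently $u=v^s$ with $s=\tfrac{2}{p-1}$. A direct calculation shows this bijection maps $p\in[\tfrac{2(n-1)^2}{2n^2+1},1)\cup(1,2]$ onto $s\in(-\infty,-\tfrac{2(2n^2+1)}{4n-1}]\cup[2,+\infty)$, and that $\tfrac{s-2}{s+2}=\tfrac{2-p}{p}$, so the coefficient of $\lambda_1$ and the constant term in \cref{ineq. interpolated BE Gamma2} match exactly those in \cref{ineq. in thm ODE of T entropy}.

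First I would compute the first two derivatives of the Tsallis entropy along the heat flow $\partial_t u=\Delta u$. Since $\int_M u$ is conserved, a single integration by parts gives
\begin{align*}
\frac{\dd}{\dd t}T_p(u)=p\int_M u^{p-2}|\nabla u|^2.
\end{align*}
Differentiating again and applying the Bochner formula $\tfrac{1}{2}\Delta|\nabla u|^2=|\nabla^2 u|^2+\langle\nabla u,\nabla\Delta u\rangle+Ric(\nabla u,\nabla u)$, combined with integration by parts to remove the remaining Laplacian, one obtains
\begin{align*}
\frac{\dd^2}{\dd t^2}T_p(u)
&=-2p\int_M u^{p-2}\bigl(|\nabla^2 u|^2+Ric(\nabla u,\nabla u)\bigr)\\
&\quad+2p(p-2)\int_M u^{p-3}|\nabla u|^2\Delta u+p(p-2)(p-3)\int_M u^{p-4}|\nabla u|^4.
\end{align*}

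Next I would substitute $v=u^{(p-1)/2}$ into both sides of \cref{ineq. interpolated BE Gamma2}. Since $v^s=u$, the weight becomes the natural heat-flow measure, and a short computation yields
\begin{align*}
\int_M v^s|\nabla v|^2=\frac{(p-1)^2}{4p}\,\frac{\dd}{\dd t}T_p(u).
\end{align*}
For the left-hand side I would expand $|\nabla^2 v|^2$, $Ric(\nabla v,\nabla v)$ and $v^{-1}|\nabla v|^2\Delta v$ in terms of $u$, then use the identity $\nabla^2 u(\nabla u,\nabla u)=\tfrac{1}{2}\langle\nabla u,\nabla|\nabla u|^2\rangle$ and integrate by parts on the mixed Hessian integral. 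The surviving terms should reassemble into the formula for $\tfrac{\dd^2}{\dd t^2}T_p(u)$ obtained above, giving
\begin{align*}
\int_M v^s\bigl(|\nabla^2 v|^2+Ric(\nabla v,\nabla v)-v^{-1}|\nabla v|^2\Delta v\bigr)=-\frac{(p-1)^2}{8p}\,\frac{\dd^2}{\dd t^2}T_p(u).
\end{align*}
Inserting these two identities into \cref{ineq. interpolated BE Gamma2} and multiplying through by $-8p/(p-1)^2<0$ (which flips the inequality) then produces \cref{ineq. in thm ODE of T entropy} at once.

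The main obstacle is the bookkeeping in the last identity. Expanding $|\nabla^2 v|^2$ for $v=u^{(p-1)/2}$ produces three distinct $u$-integrals involving $|\nabla^2 u|^2$, a mixed Hessian $\nabla^2 u(\nabla u,\nabla u)$, and $|\nabla u|^4$; the integration by parts on the middle term must conspire with the correction $-v^{-1}|\nabla v|^2\Delta v$ to reproduce exactly the coefficients $-2p$, $2p(p-2)$, $p(p-2)(p-3)$ appearing in $\tfrac{\dd^2}{\dd t^2}T_p(u)$. That this match holds precisely for the correction $-v^{-1}|\nabla v|^2\Delta v$ rather than any other modification is the conceptual reason why \cref{thm. modified weighted Gamma2} is the correct framework for the monotonicity of the Tsallis entropy.
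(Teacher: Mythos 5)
Your proposal is correct and follows essentially the same route as the paper: apply \cref{thm. modified weighted Gamma2} to $v=u^{(p-1)/2}$ with $s=\frac{2}{p-1}$, after identifying $\frac{\dd}{\dd t}T_p(u)$ and $\frac{\dd^2}{\dd t^2}T_p(u)$ with the weighted Dirichlet energy and the modified weighted $\Gamma_2$ integral of $v$ (this is the paper's \cref{prop. two derivatives of Tsallis entropy}); I checked that your second-derivative formula in the $u$-variable and the coefficient matching after the change of variables both work out. The only difference is bookkeeping: the paper establishes the key identity by deriving the evolution equation of $U=u^{(p-1)/2}$ under the heat flow and computing $\partial_t\int_M u|\nabla U|^2$ directly, whereas you compute $\frac{\dd^2}{\dd t^2}T_p(u)$ in the $u$-variable and then re-express it via one integration by parts on the mixed Hessian term --- both are valid computations of the same identity.
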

As the Bakry-\'Emery criterion did for the logarithmic Sobolev inequality, \cref{thm. ODE of T entropy} implies a family of Sobolev inequalities, which recovers the logarithmic Sobolev inequality as $q\to 2$. 
\begin{corollary}\label{cor. some Sobolev inequalities}
Let $(M^n,g)$ ($n\geq 2$) be a compact manifold with $Ric\geq n-1$. 
Denote $\lambda_1$ as the first eigenvalue of $-\Delta $ on $(M^n,g)$.
Then for $q\in[1,2)\cup\left(2,\frac{2n^2+1}{(n-1)^2}\right]$, we have
    \begin{align*}
        \fint_M|\nabla v|^2
        \geq
        \frac{1}{q-2}
        \left(\left(
        1-\frac{(n-1)^2}{n(n+2)}(q-1)
        \right)\lambda_1
        +\frac{(n-1)^2}{n+2}(q-1)
        \right)
        \left(
        \left(\fint_M v^q\right)^{\frac{2}{q}}
        -\fint_M v^2
        \right).
    \end{align*}
\end{corollary}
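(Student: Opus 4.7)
The plan is to integrate the differential inequality of \cref{thm. ODE of T entropy} along the heat flow and to convert the resulting entropy bound into the stated Sobolev inequality via a power change of variables, in direct analogy with how the Bakry-\'Emery $\Gamma_2$ criterion yields the logarithmic Sobolev inequality.

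Given positive $v \in C^\infty(M)$ and $q$ in the admissible range, I first set $p := 2/q$, which places $p$ in the range $[2(n-1)^2/(2n^2+1),1)\cup(1,2]$ required by \cref{thm. ODE of T entropy}, and take $u_0 := v^q$ as the initial datum for the heat flow $\partial_t u = \Delta u$. Since $M$ is compact, $u(t)$ remains positive for all $t\geq 0$ and converges exponentially to $\bar u := \fint_M u_0$. Setting $F(t) := T_p(u(t))$ and using that $\int_M u$ is preserved, integration by parts gives
\begin{align*}
F'(t) = \frac{1}{1-p}\int_M p u^{p-1}\Delta u = p\int_M u^{p-2}|\nabla u|^2 \geq 0,
\end{align*}
so $F$ is increasing with finite limit $F(\infty) = \int_M(\bar u^p - \bar u)/(1-p)$.

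Next, \cref{thm. ODE of T entropy} reads $F''(t) + 2CF'(t) \leq 0$ with
\begin{align*}
C = \Bigl(1 - \tfrac{(n-1)^2}{n(n+2)}(q-1)\Bigr)\lambda_1 + \tfrac{(n-1)^2}{n+2}(q-1).
\end{align*}
Combining the Lichnerowicz bound $\lambda_1 \geq n$ (which applies since $Ric \geq n-1$) with the upper restriction $q \leq (2n^2+1)/(n-1)^2$ forces the coefficient of $\lambda_1$ to be non-negative and gives $C \geq n > 0$. Hence $(e^{2Ct}F'(t))' \leq 0$, so $F'(t) \leq F'(0)e^{-2Ct}$, and integration over $[0,\infty)$ yields
\begin{align*}
\frac{\int_M(\bar u^p - u_0^p)}{1-p} = F(\infty) - F(0) \leq \frac{F'(0)}{2C} = \frac{p}{2C}\int_M u_0^{p-2}|\nabla u_0|^2.
\end{align*}

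Finally, the substitution $u_0 = v^q$ yields $u_0^p = v^2$, $\bar u^p = (\fint_M v^q)^{2/q}$, and $u_0^{p-2}|\nabla u_0|^2 = q^2|\nabla v|^2$. Dividing through by $\mathrm{vol}(M)$ and using $1-p = (q-2)/q$, the inequality rearranges to the claim. The signs are consistent on both branches $q<2$ and $q>2$, since $(\fint_M v^q)^{2/q}-\fint_M v^2$ and $q-2$ share the same sign by Jensen. The only genuine structural check is the positivity $C > 0$, which reduces to Lichnerowicz; beyond that, the argument is a one-line ODE integration together with careful bookkeeping of signs and exponents through the power substitution, and that bookkeeping is the main source of arithmetic care.
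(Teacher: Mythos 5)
Your proposal is correct and follows essentially the same route as the paper: start the heat flow at $u_0=v^q$ with $p=2/q$, integrate the differential inequality of \cref{thm. ODE of T entropy} over $[0,+\infty)$ using convergence of the flow to $\fint_M u_0$, and substitute back. The only cosmetic difference is that you first derive the exponential decay $F'(t)\leq F'(0)e^{-2Ct}$ (checking $C\geq n>0$ via Lichnerowicz) and integrate that, whereas the paper integrates $F''+2CF'\leq 0$ directly using $F'(\infty)=0$; the bookkeeping through the substitution matches the paper's.
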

\begin{remark}\label{rmk. limitation of q}
    The limitation of the range $q\leq \frac{2n^2+1}{(n-1)^2}$ is mysterious and is due to the heat equation to some extent, as observed by Bakry-\'Emery \cite{BE85}.
\end{remark}
 This family of Sobolev inequalities is in fact not new. Historically, Bakry-\'Emery \cite{BE85} proved a sharp Sobolev inequality for $q\in(2,\frac{2n^2+1}{(n-1)^2}]$. Bidaut-Veron-Veron \cite[Theorem 6.1]{BVV91} extended this to $q\in(2,\frac{2n}{n-2}]$.
After that, Fontenas \cite{Fon97} improved the sharp Sobolev constant for $q\in(2,\frac{2n}{n-2}]$ and Dolbeault-Esteban-Loss \cite[Theorem 4]{DEL14} extended this to $q\in [1,2)$. Their results could be summarized as follows.
\begin{theorem}[\cite{Fon97, DEL14}]\label{thm. DEL14}
    Let $(M^n,g) (n\geq 3)$ be a compact manifold with $Ric\geq n-1$.
    Denote $\lambda_1$ as the first eigenvalue of $-\Delta $ on $(M^n,g)$.
    Then for $q\in[1,2)\cup (2,\frac{2n}{n-2}]$ and $u\in C^{\infty}(M)$, we have
    \begin{align*}
    \fint_{M}|\nabla u|^2
    \geq 
    \frac{1}{q-2}
    \left(\left(1-\frac{(n-1)^2(q-1)}{(q-2)+(n+1)^2}\right)\lambda_1
    +\frac{(n-1)^2(q-1)}{(q-2)+(n+1)^2} n \right)\times\\
    \left(
    \left(\fint_{M}|u|^{q}\right)^{\frac{2}{q}}
    -\fint_{M}|\nabla u|^2
    \right).
    \end{align*}
\end{theorem}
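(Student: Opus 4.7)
The plan is to apply \cref{thm. ODE of T entropy} along the heat flow $u_t = \Delta u$ with initial data $u_0 := v^q$, and then integrate the resulting ODE inequality over $t \in [0,\infty)$. Setting $p = 2/q$ places $p$ in the admissible range of \cref{thm. ODE of T entropy}; the identity $2/p - 1 = q - 1$ identifies the coefficient in \cref{ineq. in thm ODE of T entropy} with the quantity $\mu := \bigl(1 - \frac{(n-1)^2(q-1)}{n(n+2)}\bigr)\lambda_1 + \frac{(n-1)^2(q-1)}{n+2}$ that appears in the corollary. Hence along the heat flow, $\frac{\dd^2}{\dd t^2}T_p(u) + 2\mu \frac{\dd}{\dd t}T_p(u) \leq 0$.

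Multiplying by the integrating factor $e^{2\mu t}$ shows that $e^{2\mu t}\frac{\dd}{\dd t}T_p(u(t))$ is non-increasing in $t$, so $\frac{\dd}{\dd t}T_p(u(t)) \leq e^{-2\mu t}\,\frac{\dd}{\dd t}T_p(u)\bigr|_{t=0}$. Since any positive solution of the heat equation on a compact manifold converges smoothly to its mean $\overline{u_0} := \fint_M u_0$ as $t \to \infty$, integrating the bound over $[0,\infty)$ yields $T_p(\overline{u_0}) - T_p(u_0) \leq \frac{1}{2\mu}\,\frac{\dd}{\dd t}T_p(u)\bigr|_{t=0}$.

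Next I would translate both sides back into $v$. Differentiating under the heat equation gives $\frac{\dd}{\dd t}T_p(u) = p\int_M u^{p-2}|\nabla u|^2$; with $u_0 = v^q$ (so $u_0^{p-2}|\nabla u_0|^2 = q^2|\nabla v|^2$) this becomes $\frac{\dd}{\dd t}T_p(u)\bigr|_{t=0} = 2q\int_M|\nabla v|^2$. Using $u_0^p = v^2$ together with conservation of mass $\fint_M u(t) = \fint_M v^q$ one computes
\begin{equation*}
    T_p(\overline{u_0}) - T_p(u_0) \;=\; \frac{q|M|}{q-2}\left(\Bigl(\fint_M v^q\Bigr)^{2/q} - \fint_M v^2\right).
\end{equation*}
Substituting both expressions into the ODE estimate and dividing by $q|M| > 0$ produces exactly the corollary.

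The one step requiring a check is that $\mu > 0$ throughout the admissible range, so that dividing by $2\mu$ preserves the inequality. Writing $\mu = (1-A)\lambda_1 + nA$ with $A := \frac{(n-1)^2(q-1)}{n(n+2)}$, the upper endpoint $q = \frac{2n^2+1}{(n-1)^2}$ is exactly the value at which $A = 1$; combined with $\lambda_1 \geq n$ (Lichnerowicz) this gives $\mu > 0$ throughout. The sign of $(q-2)$ matches that of $\bigl(\fint_M v^q\bigr)^{2/q} - \fint_M v^2$ by Jensen's inequality in both ranges $q < 2$ and $q > 2$, so the corollary holds uniformly on both sides of $q = 2$. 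Beyond this sign bookkeeping, the argument is a mechanical unwinding of the two identities above and the integrating-factor trick.
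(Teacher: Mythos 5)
Your argument is a correct reconstruction of the paper's proof of \cref{cor. some Sobolev inequalities} — the integrating-factor step, the identities $\frac{\dd}{\dd t}T_p(u)\big|_{t=0}=2q\int_M|\nabla v|^2$ and $T_p(\overline{u_0})-T_p(u_0)=\frac{q|M|}{q-2}\bigl(\bigl(\fint_M v^q\bigr)^{2/q}-\fint_M v^2\bigr)$, and the positivity check on $\mu$ all match what the paper does. But that is not the statement you were asked to prove. \cref{thm. DEL14} is a cited result of Fontenas and Dolbeault--Esteban--Loss which the paper does \emph{not} prove, and it is strictly stronger than \cref{cor. some Sobolev inequalities} in two ways that your heat-flow argument cannot reach. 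First, the exponent range: you obtain the inequality only for $q\in[1,2)\cup\bigl(2,\tfrac{2n^2+1}{(n-1)^2}\bigr]$, whereas the theorem covers all $q\le\tfrac{2n}{n-2}$, and $\tfrac{2n}{n-2}-\tfrac{2n^2+1}{(n-1)^2}=\tfrac{n+2}{(n-2)(n-1)^2}>0$ for $n\ge 3$. This restriction is intrinsic to the linear heat flow: the paper flags it in \cref{rmk. limitation of q}, and \cref{rmk. counterexample} shows the underlying modified $\Gamma_2$ inequality \cref{ineq. interpolated BE Gamma2} actually fails on $\mathbb{S}^n$ once $s=\tfrac{2}{p-1}$ leaves the admissible range. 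Second, the constant: since $(q-2)+(n+1)^2=(q-1)+n(n+2)$, the weight $A':=\tfrac{(n-1)^2(q-1)}{(q-2)+(n+1)^2}$ in the theorem is strictly smaller than your $A:=\tfrac{(n-1)^2(q-1)}{n(n+2)}$ for $q>1$; writing both constants in the form $\lambda_1-A(\lambda_1-n)$ and using $\lambda_1\ge n$, the theorem's constant dominates yours, strictly whenever $\lambda_1>n$. The paper says exactly this in its concluding remarks (the corollary "is slightly weaker than the Sobolev constant in \cref{thm. DEL14}").

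So the gap is not a flaw in your computations but a mismatch of target: to prove \cref{thm. DEL14} one needs the machinery of the cited references — the Bidaut-V\'eron--V\'eron rigidity analysis of the critical semilinear equation and its refinements via nonlinear flows — rather than the linear-heat-flow monotonicity of the Tsallis entropy, which by design can only yield the weaker constant and the truncated range of \cref{cor. some Sobolev inequalities}.
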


Our contribution here is to formulate a unified framework for a family of sharp Sobolev inequalities, including the Poincar\'e inequality (i.e. $q=1$) and the logarithmic Sobolev inequality (i.e. $q=2$), by deriving them from a monotonicity property of the Tsallis entropy under the heat flow, which is a natural generalization of the Bakry-\'Emery $\Gamma_2$ criterion.

Finally, we mention that the R\'enyi entropy \cite{Ren61} provides another generalization of the Shannon entropy, preserving most of its essential properties in the Euclidean setting. The R\'enyi entropy has garnered lots of study on both Euclidean space and noncompact manifolds, as seen in works such as \cite{Raj12,ST14, Kel16, Bra23, Sug25}.
Our work shows that the Tsallis entropy on compact manifolds with positive Ricci curvature has better properties.

    This paper is organized as follows. 
    In \cref{sec. preliminaries}, we present two useful integral identities. 
    In \cref{sec. proof of thm}, we prove our \cref{thm. CW} and present an alternative proof of \cref{thm. Ji}. 
    In \cref{sec. modified weighted Gamma2}, we prove \cref{thm. modified weighted Gamma2}.
    In \cref{sec. corollaries}, we investigate the monotonicity of the Tsallis entropy and prove \cref{thm. ODE of T entropy}, \cref{cor. some Sobolev inequalities}.
    
\textbf{Acknowledgement:} The first author's research is partially supported by NSFC-12031012, NSFC-12171313 and NSFC-12250710674.
\section{Some integral identities}\label{sec. preliminaries}
\begin{lemma}\label{lem.laplace squared}
        Let $(M^n,g)$ be a compact manifold without boundary. Then for any $u\in C^{\infty}(M)$, there holds
        \begin{align*}
            \int_M(\Delta u)^2
            =&\int_M
            \left(\left|\nabla^2 u\right|^2
            +\mathrm{Ric}(\nabla u,\nabla u)\right).
        \end{align*}
    \end{lemma}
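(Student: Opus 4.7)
The plan is to deduce the identity from the Bochner formula together with two applications of the divergence theorem on the closed manifold $M$. Specifically, the Bochner--Weitzenb\"ock formula applied to a smooth function $u$ reads, pointwise,
$$\tfrac{1}{2}\Delta |\nabla u|^2 = |\nabla^2 u|^2 + \langle \nabla u, \nabla \Delta u\rangle + \mathrm{Ric}(\nabla u, \nabla u).$$
This is the only non-elementary input required.

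I would then integrate both sides over $M$. Since $M$ is compact without boundary, $\int_M \Delta |\nabla u|^2 = 0$, so the left-hand side vanishes. For the middle term on the right, a single integration by parts gives $\int_M \langle \nabla u, \nabla \Delta u\rangle = -\int_M (\Delta u)^2$. Rearranging the resulting identity yields the claim.

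There is essentially no obstacle here; the only decision is whether to quote Bochner or to rederive the equivalent statement inline. A self-contained alternative is to integrate $\int_M (\Delta u)^2$ by parts twice, write $\Delta u = g^{ij}\nabla_i\nabla_j u$, and commute the middle pair of covariant derivatives using the Ricci identity $\nabla_i\nabla_j\nabla_k u - \nabla_j\nabla_i\nabla_k u = R_{ijk}{}^{l}\nabla_l u$. Tracing the curvature contribution produces exactly $\mathrm{Ric}(\nabla u,\nabla u)$, and collecting the surviving terms recovers the Hessian-squared contribution. Either route is short and routine, so I would opt for the Bochner formulation for brevity.
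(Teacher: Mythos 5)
Your proof is correct and follows exactly the paper's route: integrate the Bochner formula over the closed manifold, use that $\int_M \Delta|\nabla u|^2 = 0$, and integrate $\langle\nabla u,\nabla\Delta u\rangle$ by parts to produce $-\int_M(\Delta u)^2$. The additional remarks about the Ricci-identity alternative are fine but not needed.
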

    \begin{proof}
 By the Bochner formula,
        \begin{align*}
            \frac{1}{2}\Delta |\nabla u|^2
            =\left|\nabla^2 u\right|^2
            +\langle\nabla \Delta u,\nabla u\rangle
            +\mathrm{Ric}(\nabla u,\nabla u).
        \end{align*}
        Integrate it over $M$ and the proof finishes.
    \end{proof}

    \begin{lemma}\label{lem. integral identity}
        Let $(M^n,g)$ be a compact manifold without boundary. If $u\in C^{\infty}(M)$ is a positive function, then
        \begin{align*}
            \int_M u^{-1}|\nabla u|^2\Delta u
            =&\frac{n}{n+2}\int_M u^{-2}|\nabla u|^4
            -\frac{2n}{n+2}\int_M 
            \left\langle\nabla^2 u-\frac{\Delta u}{n}g,\frac{du\otimes du}{u}-\frac{1}{n}\frac{|\nabla u|^2}{u}g\right\rangle\\
            \int_M \left\langle
            \nabla^2u,\frac{\dd u\otimes \dd u}{u}
            \right\rangle
            =&\frac{1}{n+2}\int_M u^{-2}|\nabla u|^4
            +\frac{n}{n+2}\int_M \left\langle\nabla^2 u-\frac{\Delta u}{n}g,\frac{du\otimes du}{u}-\frac{1}{n}\frac{|\nabla u|^2}{u}g\right\rangle.
        \end{align*}
    \end{lemma}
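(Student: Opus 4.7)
The plan is to introduce three auxiliary integrals and exhibit two simple linear relations among them that will pin down the two quantities the lemma asks for. I would set
\[
A=\int_M u^{-1}|\nabla u|^2\Delta u,\qquad B=\int_M u^{-2}|\nabla u|^4,\qquad C=\int_M\left\langle\nabla^2 u,\tfrac{\dd u\otimes \dd u}{u}\right\rangle,
\]
together with the integral on the right-hand side of the first identity,
\[
D=\int_M\left\langle\nabla^2 u-\tfrac{\Delta u}{n}g,\tfrac{\dd u\otimes \dd u}{u}-\tfrac{1}{n}\tfrac{|\nabla u|^2}{u}g\right\rangle,
\]
and aim to prove
\[
A=B-2C,\qquad D=C-\tfrac{1}{n}A.
\]
The two identities of the lemma then follow by solving a $2\times 2$ linear system for $A$ and $C$ in terms of $B$ and $D$.

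For the first relation I would integrate by parts: on a closed manifold $\int_M f\Delta u=-\int_M\langle\nabla f,\nabla u\rangle$, applied with $f=u^{-1}|\nabla u|^2$. Combining $\nabla(u^{-1}|\nabla u|^2)=-u^{-2}|\nabla u|^2\nabla u+2u^{-1}\nabla^2 u(\nabla u,\cdot)$ with the identification $u^{-1}\nabla^2 u(\nabla u,\nabla u)=\langle\nabla^2 u,\tfrac{\dd u\otimes \dd u}{u}\rangle$ yields $A=B-2C$ after pairing with $\nabla u$.

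For the second relation I would just expand the inner product defining $D$ algebraically, using $\langle g,\nabla^2 u\rangle=\Delta u$, $\langle g,\tfrac{\dd u\otimes \dd u}{u}\rangle=\tfrac{|\nabla u|^2}{u}$, and $|g|^2=n$. The three cross terms produced by the two $g$-pieces should collapse into a single multiple of $u^{-1}|\nabla u|^2\Delta u$, producing the pointwise identity
\[
\left\langle\nabla^2 u-\tfrac{\Delta u}{n}g,\tfrac{\dd u\otimes \dd u}{u}-\tfrac{1}{n}\tfrac{|\nabla u|^2}{u}g\right\rangle
=\left\langle\nabla^2 u,\tfrac{\dd u\otimes \dd u}{u}\right\rangle-\tfrac{1}{n}u^{-1}|\nabla u|^2\Delta u,
\]
which integrates to $D=C-\tfrac{1}{n}A$.

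I do not expect any genuine obstacle: the argument is one integration by parts on a closed manifold combined with the elementary algebraic identity for the trace/traceless decomposition of a symmetric $2$-tensor, followed by a $2\times 2$ linear solve. Positivity of $u$ is used only to ensure that $u^{-1}$ is smooth so that all boundary terms vanish. The only point requiring care is the bookkeeping of the $\tfrac{1}{n}$ and $\tfrac{1}{n^2}$ factors in the expansion of $D$, to verify that the coefficients combine exactly into the $\tfrac{n}{n+2}$ and $\tfrac{1}{n+2}$ appearing in the lemma.
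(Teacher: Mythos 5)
Your proposal is correct and is essentially the paper's own argument: the paper likewise obtains $A=B-2C$ by one integration by parts and then substitutes the same pointwise trace/traceless decomposition identity $C=D+\tfrac1nA$ before rearranging. The only cosmetic difference is that you phrase the final step as solving a $2\times2$ linear system, whereas the paper substitutes and rearranges directly.
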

    \begin{proof}
       By divergence theorem,
        \begin{align*}
            \int_M u^{-1}|\nabla u|^2\Delta u
            =&\int_M\mathrm{div}(u^{-1}|\nabla u|^2\nabla u)-(2u^{-1}\langle\nabla_{\nabla u}\nabla u,\nabla u\rangle-u^{-2}|\nabla u|^4)\\
            =&\int_M u^{-2}|\nabla u|^4
            -2\int_M 
            \left\langle
            \nabla ^2u
            ,\frac{du\otimes du}{u}
            \right\rangle\\
            =&\int_M u^{-2}|\nabla u|^4
            -\frac{2}{n}\int_M u^{-1}|\nabla u|^2\Delta u
            -2\int_M\left\langle
            \nabla ^2u-\frac{\Delta u}{n}g
            ,\frac{du\otimes du}{u}-\frac{1}{n}\frac{|\nabla u|^2}{u}g
            \right\rangle,
        \end{align*}
        where we used
        \begin{align}\label{eq. identity}
            \left\langle
            \nabla^2u,\frac{\dd u\otimes \dd u}{u}
            \right\rangle
            =\left\langle
            \nabla ^2u-\frac{\Delta u}{n}g
            ,\frac{du\otimes du}{u}-\frac{1}{n}\frac{|\nabla u|^2}{u}g
            \right\rangle
            +\frac{1}{n}u^{-1}|\nabla u|^2\Delta u.
        \end{align}
        Rearrange it and we get the first desired identity. The second one follows by using \cref{eq. identity} again.
    \end{proof}

 \section{Weighted Bakry-\'Emery $\Gamma_2$ criterion inequality}\label{sec. proof of thm}
 In this section, we shall prove \cref{thm. CW}, from which an alternative proof of \cref{thm. Ji} follows.
 \begin{proof}[Proof of \cref{thm. CW}:]
 For $s=0$, \cref{ineq. weighted BE Gamma2} follows from \cref{lem.laplace squared} and the fact that
 \begin{align*}
     \int_M (\Delta u)^2\geq 
     \lambda_1\int_M |\nabla u|^2,\quad \forall u\in C^{\infty}(M)
 \end{align*}
 
 For $s\in (-\infty,-2)\cup(-2,0)\cup\left[\frac{4(n+2)}{4n-1},+\infty\right)$, let $t:=s+2\in(-\infty,0)\cup(0,2)\cup\left[\frac{6(2n+1)}{4n-1},+\infty\right)$ and $u:=v^{\frac{t}{2}}$. It is straightforward to see that
 \begin{align*}
   v^{\frac{s}{2}} \nabla v
        =\frac{2}{t}\nabla u, \quad \text{and}\quad
        v^{\frac{s}{2}}\nabla^2v
        =\frac{2}{t}\nabla^2 u+
        \frac{2}{t}\left(\frac{2}{t}-1\right)
        u^{-1}\dd u\otimes \dd u.
 \end{align*}
 We compute 
 \begin{align*}
     & \left(\frac{t}{2}\right)^2
     \int_{M} v^s
     \left(|\nabla^2 v|^2
     +Ric(\nabla v,\nabla v)
     \right)\notag\\
     =&\int_M
     |\nabla^2 u|^2
     +\left(\frac{2}{t}-1\right)^2
     \int_M u^{-2}|\nabla u|^4
     +2\left(\frac{2}{t}-1\right)\int_M
     \left\langle\nabla^2 u,\frac{\dd u\otimes \dd u}{u}\right\rangle
     +\int_M Ric(\nabla u,\nabla u)\notag\\
     =&\alpha
     \int_M \left|\nabla^2 u\right|^2
     +(1-\alpha)\int_M \left|\nabla^2 u\right|^2
     +\int_M Ric(\nabla u,\nabla u)\notag
     \\
     &+\left(\frac{2}{t}-1\right)^2
     \int_M u^{-2}|\nabla u|^4
     +2\left(\frac{2}{t}-1\right)
     \int_M 
     \left\langle\nabla^2 u,
     \frac{\dd u\otimes \dd u}{u}\right\rangle\notag,\\
     \end{align*}
     where $\alpha$ is a positive parameter to be determined later. Replacing the 2nd term using \cref{lem.laplace squared}, we continue and use \cref{lem. integral identity}
     \begin{align}
      & \left(\frac{t}{2}\right)^2
     \int_{M} v^s
     \left(|\nabla^2 v|^2
     +Ric(\nabla v,\nabla v)
     \right)\notag\\=&\alpha
     \int_M \left(\left|\nabla^2 u-\frac{\Delta u}{n}g\right|^2+\frac{1}{n}(\Delta u)^2\right)
     +(1-\alpha)\int_M (\Delta u)^2
     +\alpha\int_M Ric(\nabla u,\nabla u)\notag
     \\
     &+\frac{2}{n+2}\left(\frac{2}{t}-1\right)
     \int_M u^{-2}|\nabla u|^4
     +\frac{2n}{n+2}\left(\frac{2}{t}-1\right)
     \int_M \left\langle\nabla^2 u-\frac{\Delta u}{n}g,\frac{\dd u\otimes \dd u}{u}-\frac{1}{n}\frac{|\nabla u|^2}{u}g\right\rangle\notag\\
     &+\left(\frac{2}{t}-1\right)^2
     \int_M u^{-2}|\nabla u|^4\notag\\
     =&\alpha\int_M \left|\nabla^2 u-\frac{\Delta u}{n}g\right|^2
     +\frac{2n}{n+2}\left(\frac{2}{t}-1\right)
     \int_M \left\langle\nabla^2 u-\frac{\Delta u}{n}g,\frac{\dd u\otimes \dd u}{u}-\frac{1}{n}\frac{|\nabla u|^2}{u}g\right\rangle\notag\\
     &+\left(\frac{2}{t}-1\right)
     \left(\frac{2}{t}-\frac{n}{n+2}\right)
     \int_M u^{-2}|\nabla u|^4\notag\\
     &+\left(1-\frac{n-1}{n}\alpha\right)\int_M (\Delta u)^2
     +\alpha\int_M Ric(\nabla u,\nabla u)\notag\\
     =&\int_M \left|\sqrt{\alpha}
     \left(\nabla^2 u-\frac{\Delta u}{n}g\right)
     +\frac{n}{n+2}
     \left(\frac{2}{t}-1\right)
     \frac{1}{\sqrt{\alpha}}
     \left(
     \frac{\dd u\otimes \dd u}{u}-\frac{1}{n}\frac{|\nabla u|^2}{u}g
     \right)
     \right|^2\notag\\
     &+\left(\frac{2}{t}-1\right)
     \left(
     \frac{2}{t}-\frac{n}{n+2}
     -\frac{n(n-1)}{(n+2)^2}
     \left(\frac{2}{t}-1\right)
     \frac{1}{\alpha}
     \right)\int_M u^{-2}|\nabla u|^4\notag\\
     &+\left(1-\frac{n-1}{n}\alpha\right)\int_M (\Delta u)^2
     +\alpha\int_M Ric(\nabla u,\nabla u),
     \label{eq. proof}
 \end{align}
 where we used the fact that 
 \begin{align*}
     \left|\frac{\dd u\otimes \dd u}{u}-\frac{1}{n}\frac{|\nabla u|^2}{u}g\right|^2=\frac{n-1}{n}u^{-2}|\nabla u|^4.
 \end{align*}
 Now we choose $\alpha$ such that the coefficient of $\int_M u^{-2}|\nabla u|^4$ in \cref{eq. proof} vanishes, i.e.
 \begin{align*}
     \alpha=\frac{n(n-1)\left(\frac{2}{t}-1\right)}{(n+2)\left(\frac{2}{t}(n+2)-n\right)}
        =\frac{(n-1)ns}{(n+2)(ns-4)}.
 \end{align*}
 Moreover, we want  the coefficients of the last two terms in \cref{eq. proof} to be nonnegative. Notice that
 \begin{align*}
     \alpha>0&\Longleftrightarrow
     s\in (-\infty,0)\cup \left(\frac{4}{n},+\infty\right)\\
     \alpha\leq \frac{n}{n-1}
     &\Longleftrightarrow
     s\in(-\infty,0]\cup\left[\frac{4(n+2)}{4n-1},+\infty\right).
 \end{align*}
 Therefore, with $s\in (-\infty,-2)\cup(-2,0)\cup\left[\frac{4(n+2)}{4n-1},+\infty\right)$, we could continue \cref{eq. proof} as follows:
 \begin{align*}
     & \left(\frac{t}{2}\right)^2
     \int_{M} v^s
     \left(|\nabla^2 v|^2
     +Ric(\nabla v,\nabla v)
     \right)\\
     =&\int_M \left|\sqrt{\alpha}
     \left(\nabla^2 u-\frac{\Delta u}{n}g\right)
     +\frac{n}{n+2}
     \left(\frac{2}{t}-1\right)
     \frac{1}{\sqrt{\alpha}}
     \left(
     \frac{\dd u\otimes \dd u}{u}-\frac{1}{n}\frac{|\nabla u|^2}{u}g
     \right)
     \right|^2\notag\\
     &+\left(\frac{2}{t}-1\right)
     \left(
     \frac{2}{t}-\frac{n}{n+2}
     -\frac{n(n-1)}{(n+2)^2}
     \left(\frac{2}{t}-1\right)
     \frac{1}{\alpha}
     \right)\int_M u^{-2}|\nabla u|^4\notag\\
     &+\left(1-\frac{n-1}{n}\alpha\right)\int_M (\Delta u)^2
     +\alpha\int_M Ric(\nabla u,\nabla u)\\
     \geq& 
     \left(1-\frac{n-1}{n}\alpha\right)\lambda_1
     \int_M |\nabla u|^2
     +(n-1)\alpha\int_M |\nabla u|^2\\
     =&\left(\frac{t}{2}\right)^2
     \left(\left(1-\frac{(n-1)^2s}{(n+2)(ns-4)}\right)\lambda_1
        +\frac{(n-1)^2ns}{(n+2)(ns-4)}
        \right)
        \int_M v^s|\nabla v|^2.
     \end{align*}

    This yields the desired inequality. For $s=-2$, the desired inequality follows by taking $s\to -2$ in \cref{ineq. weighted BE Gamma2}. 
 \end{proof}
 
    \begin{corollary}[Ji, \cite{Ji24}]\label{cor. recover Ji}
        Let $(M^n,g)$ be a compact Riemannian manifold with $Ric\geq n-1$. Denote $\lambda_1$ as the first eigenvalue of $-\Delta $ on $(M^n,g)$. Then for any $0<f\in C^{\infty}(M)$,
        \begin{align*}
        \int_M f(|\nabla^2\log f|^2+Ric(\nabla \log f,\nabla \log f))
        \geq 
        \left(
        \frac{4n-1}{n(n+2)}\lambda_1
            +\frac{(n-1)^2}{n(n+2)}n
        \right)
            \int_M f|\nabla \log f|^2.
        \end{align*}
\end{corollary}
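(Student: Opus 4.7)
The plan is to derive this corollary from \cref{thm. CW} by setting $v = f^{1/s}$ and letting $s \to +\infty$. Any sufficiently large $s$ lies in the admissible range $\left[\frac{4(n+2)}{4n-1},+\infty\right)$, so the substitution is legitimate, and because $f$ is smooth and strictly positive on the compact manifold $M$, the function $v = f^{1/s}$ converges uniformly to $1$ as $s \to +\infty$.

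I will first compute, from $v = f^{1/s}$, the identities
\begin{align*}
\nabla v = \frac{1}{s}\, v\, \nabla\log f, \qquad
\nabla^2 v = \frac{1}{s}\, v\left(\nabla^2\log f + \frac{1}{s}\,\dd\log f\otimes\dd\log f\right),
\end{align*}
which, together with $v^s = f$ and $v^2 = f^{2/s}$, immediately give
\begin{align*}
s^2\, v^s |\nabla v|^2 &= f\cdot f^{2/s}\,|\nabla\log f|^2,\\
s^2\, v^s |\nabla^2 v|^2 &= f\cdot f^{2/s}\,\left|\nabla^2\log f + \tfrac{1}{s}\,\dd\log f\otimes\dd\log f\right|^2,\\
s^2\, v^s Ric(\nabla v,\nabla v) &= f\cdot f^{2/s}\,Ric(\nabla\log f,\nabla\log f).
\end{align*}
Thus, after multiplying the inequality \cref{ineq. weighted BE Gamma2} through by $s^2$, every integrand becomes one of these three expressions, each of which converges uniformly on $M$ to its ``$\log f$'' analogue as $s \to +\infty$.

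The last step is to identify the limiting constant. A direct computation yields
\begin{align*}
\lim_{s\to\infty}\left[\left(1 - \frac{(n-1)^2 s}{(n+2)(ns-4)}\right)\lambda_1 + \frac{(n-1)^2 ns}{(n+2)(ns-4)}\right] = \frac{4n-1}{n(n+2)}\lambda_1 + \frac{(n-1)^2}{n+2},
\end{align*}
and since $\frac{(n-1)^2}{n+2} = \frac{(n-1)^2}{n(n+2)}\cdot n$, this agrees exactly with Ji's constant. I do not anticipate any substantive obstacle: the only point requiring attention is the interchange of limit and integral, which is immediate from the uniform convergence guaranteed by the compactness of $M$ and the uniform positive lower/upper bounds on $f$.
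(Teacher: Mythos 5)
Your proposal is correct and is essentially the paper's own argument: both derive the corollary by applying \cref{thm. CW} to a one-parameter family of positive functions converging uniformly to $1$ whose $s$-th power recovers $f$, then pass to the limit using the uniform positive bounds on $f$ guaranteed by compactness, with the same limiting constant $\frac{4n-1}{n(n+2)}\lambda_1+\frac{(n-1)^2}{n+2}$. The only immaterial difference is the choice of family and the direction of the limit: you take $v=f^{1/s}$ with $s\to+\infty$, whereas the paper takes $v=1+\frac{\log f}{s}$ with $s\to-\infty$ (so that $v^s\to f$), and both land on Ji's inequality.
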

\begin{proof}
    For any fixed $0<f\in C^{\infty}(M)$, let $v:=-\log f$. Then for $s\ll -1$, the function $1-\frac{v}{s}$ is positive on $M$. Then we could apply \cref{thm. CW} to $1-\frac{v}{s}$ and get
    \begin{align*}
        &\int_M \left(1-\frac{v}{s}\right)^s\frac{1}{s^2}
        \left(|\nabla^2 v|^2+Ric(\nabla v,\nabla v)\right)\\
        \geq &
        \left(\left(1-\frac{(n-1)^2s}{(n+2)(ns-4)}\right)\lambda_1
        +\frac{(n-1)^2ns}{(n+2)(ns-4)}
        \right)
        \int_M \left(1-\frac{v}{s}\right)^s\frac{1}{s^2}|\nabla v|^2.
    \end{align*}
    Multiply both sides by $s^2$ and let $s$ go to $-\infty$, we further notice that $\lim_{s\to-\infty}(1-\frac{v}{s})^s=e^{-v}=f$ and $\lim_{s\to-\infty}\frac{(n-1)^2ns}{(n+2)(ns-4)}=\frac{(n-1)^2}{n+2}$, so
    \begin{align*}
        \int_M f(|\nabla^2\log f|^2+Ric(\nabla \log f,\nabla \log f))
        \geq 
        \left(\frac{4n-1}{n(n+2)}\lambda_1
            +\frac{(n-1)^2}{n(n+2)}n
            \right)
        \int_M f|\nabla \log f|^2.
    \end{align*}
    This finishes our proof.
\end{proof}

\begin{remark}
    Ji's original arguments utilize a linear combination of two integral inequalities  and involve two delicately chosen parameters $\theta$ and $\beta$. In contrast, in our proof of \cref{thm. CW} (and hence \cref{cor. recover Ji}), only one parameter $\alpha$ is used.
\end{remark}

Now we elucidate the Bakry-\'Emery $\Gamma_2$ criterion precisely.

Given $0<f_0\in C^{\infty}(M)$, consider the heat equation on $(M^n,g)$ with the initial data $f_0$, i.e.
    \begin{align*}
        \begin{cases}
            \partial_tf=\Delta f\\
            f(0,\cdot)=f_0.
        \end{cases}
    \end{align*}
    It is straightforward to see that the time derivatives of the Shannon entropy \cref{eq. Shannon entropy} are as follows:
    \begin{align*}
        \frac{\dd }{\dd t}S(f)
        =&\int_M f|\nabla\log f|^2,\\
        \frac{\dd^2}{\dd t^2}S(f)
        =&-2\int_M f\left(
        |\nabla^2\log f|^2+Ric(\nabla \log f,\nabla \log f)
        \right).
    \end{align*}
    Hence \cref{cor. recover Ji} could be interpreted as
    \begin{align*}
        \frac{\dd^2}{\dd t^2}S(f)
        +2\left(
        \frac{4n-1}{n(n+2)}\lambda_1
            +\frac{(n-1)^2}{n(n+2)}n
        \right)
        \frac{\dd}{\dd t}S(f)
        \leq 0.
    \end{align*}
    Integrating for $t$ over $[0,+\infty)$ and noticing that $\lim_{t\to+\infty}f(t,\cdot)=\fint_M f_0$, we get
    \begin{align*}
        0-\int_Mf_0|\nabla\log f_0|^2
        +2\left(
        \frac{4n-1}{n(n+2)}\lambda_1
            +\frac{(n-1)^2}{n(n+2)}n
        \right)
        \left(
        -\int_M f_0 \log\left(\fint_M f_0\right)
        +\int_M f_0\log f_0
        \right)
        \leq 0.
    \end{align*}
    Rearranging it gives the logarithmic Sobolev inequality \cref{ineq. equivalent log Sob} with $C
    =\frac{4n-1}{n(n+2)}\lambda_1
            +\frac{(n-1)^2}{n(n+2)}n
        $.
        \section{Modified weighted Bakry-\'Emery $\Gamma_2$ inequality}\label{sec. modified weighted Gamma2}
        In this section, we prove the modified weighted Bakry-\'Emery $\Gamma_2$ inequality in \cref{thm. modified weighted Gamma2}. The strategy is similar to that employed in the proof of \cref{thm. CW}.
        \begin{proof}[Proof of \cref{thm. modified weighted Gamma2}:]
     First, consider $s\in\left(-\infty,-\frac{2(2n^2+1)}{4n-1}\right]\cup(2,+\infty)$. Let $t:=s+2\in\left(-\infty,-\frac{4(n-1)^2}{4n-1}\right]\cup(4,+\infty)$ and $u:=v^{\frac{t}{2}}$. It is straightforward to see that
 \begin{align*}
        v^{\frac{s}{2}} \nabla v
        =\frac{2}{t}\nabla u,\quad \text{and}\quad
        v^{\frac{s}{2}}\nabla^2v
        =\frac{2}{t}\nabla^2 u+
        \frac{2}{t}\left(\frac{2}{t}-1\right)
        u^{-1}\dd u\otimes \dd u
        ,\quad 
        \\
        v^sv^{-1}|\nabla v|^2\Delta v
        =\left(\frac{2}{t}\right)^3\left(\frac{2}{t}-1\right)u^{-2}|\nabla u|^4
        +\left(\frac{2}{t}\right)^3u^{-1}|\nabla u|^2\Delta u.
 \end{align*}
 Let $\alpha $ be a positive number to be determined later. Using the same strategy as in the derivation of \cref{eq. proof}, it follows from \cref{lem. integral identity} and \cref{lem.laplace squared} that
 \begin{align}
     & \left(\frac{t}{2}\right)^2
     \int_{M} v^s
     \left(|\nabla^2 v|^2
     +Ric(\nabla v,\nabla v)
     -v^{-1}|\nabla v|^2\Delta v
     \right)\notag\\
     =&\int_M |\nabla^2 u|^2
     +\left(\frac{2}{t}-1\right)^2\int_Mu^{-2}|\nabla u|^4
     +2\left(\frac{2}{t}-1\right)\int_M
     \left\langle\nabla^2 u,\frac{\dd u\otimes\dd u}{u}\right\rangle
     +\int_M Ric(\nabla u,\nabla u)\notag\\
     &-\frac{2}{t}\left(\frac{2}{t}-1\right)\int_M
     u^{-2}|\nabla u|^4
     -\frac{2}{t}\int_M
     u^{-1}|\nabla u|^2\Delta u\notag\\
     =&\int_M|\nabla^2 u|^2
     +\left(1-\frac{2}{t}\right)\int_M
     u^{-2}|\nabla u|^4
     +\int_M Ric(\nabla u,\nabla u)
     \notag\\
     &+2\left(\frac{2}{t}-1\right)\int_M
     \left\langle\nabla^2 u,\frac{\dd u\otimes\dd u}{u}\right\rangle
     -\frac{2}{t}\int_M
     u^{-1}|\nabla u|^2\Delta u
     \notag\\
     =&\int_M |\nabla^2 u|^2
     +\left(1-\frac{2}{t}+2\left(\frac{2}{t}-1\right)\frac{1}{n+2}
     -\frac{2}{t}\frac{n}{n+2}\right)\int_M
     u^{-2}|\nabla u|^4
     +\int_M Ric(\nabla u,\nabla u)\notag\\
     &+\left(
     2\left(\frac{2}{t}-1\right)\frac{n}{n+2}
     +\frac{2}{t}\frac{2n}{n+2}
     \right)\int_M
     \left\langle\nabla^2 u-\frac{\Delta u}{n}g,\frac{\dd u\otimes \dd u}{u}-\frac{1}{n}\frac{|\nabla u|^2}{u}g\right\rangle
     \notag\\
     =&\int_M |\nabla^2 u|^2
     +\frac{2n}{n+2}\left(\frac{4}{t}-1\right)\int_M
     \left\langle\nabla^2 u-\frac{\Delta u}{n}g,\frac{\dd u\otimes \dd u}{u}-\frac{1}{n}\frac{|\nabla u|^2}{u}g\right\rangle
     +\int_M Ric(\nabla u,\nabla u)
     \notag\\
     &+\frac{n}{n+2}\left(1-\frac{4}{t}\right)\int_M
     u^{-2}|\nabla u|^4
     \notag\\
     =&\int_M \alpha\left|\nabla^2 u-\frac{\Delta u}{n}g\right|^2
     +\frac{\alpha}{n}\int_M
     (\Delta u)^2
     +(1-\alpha)\int_M
     (\Delta u)^2
     +\alpha \int_M Ric(\nabla u,\nabla u)\notag\\
     &+\frac{2n}{n+2}\left(\frac{4}{t}-1\right)\int_M
     \left\langle\nabla^2 u-\frac{\Delta u}{n}g,\frac{\dd u\otimes \dd u}{u}-\frac{1}{n}\frac{|\nabla u|^2}{u}g\right\rangle
     +\frac{n}{n+2}\left(1-\frac{4}{t}\right)
     \int_M u^{-2}|\nabla u|^4\notag\\
     =&\int_M \left|\sqrt{\alpha}
     \left(\nabla^2 u-\frac{\Delta u}{n}g\right)
     +\frac{n}{n+2}\left(\frac{4}{t}-1\right)\frac{1}{\sqrt{\alpha}}
     \left(\frac{\dd u\otimes \dd u}{u}-\frac{1}{n}\frac{|\nabla u|^2}{u}g\right)\right|^2
     \notag\\
     &+\left(
     \frac{n}{n+2}\left(1-\frac{4}{t}\right)
     -\frac{n(n-1)}{(n+2)^2}\left(\frac{4}{t}-1\right)^2\frac{1}{\alpha}
     \right)
     \int_M u^{-2}|\nabla u|^4\notag
     \\
     &+\left(1-\frac{n-1}{n}\alpha\right)
     \int_M (\Delta u)^2
     +\alpha \int_M Ric(\nabla u,\nabla u).\label{eq. proof2}
 \end{align}
 Now we choose $\alpha$ such that the coefficient of $\int_M u^{-2}|\nabla u|^4$ in \cref{eq. proof2} vanishes, i.e.
 \begin{align*}
     \alpha=\frac{n-1}{n+2}\left(1-\frac{4}{t}\right)
     =\frac{(n-1)(s-2)}{(n+2)(s+2)}>0.
 \end{align*}
 Moreover, we want  the coefficients of the last two terms in \cref{eq. proof2} to be nonnegative. Notice that
 \begin{align*}
     \alpha>0&\Longleftrightarrow
     s\in (-\infty,-2)\cup \left(2,+\infty\right),\\
     \alpha\leq \frac{n}{n-1}
     &\Longleftrightarrow
     s\in\left(-\infty,-\frac{2(2n^2+1)}{4n-1}\right]\cup
     \left[-2,+\infty\right).
 \end{align*}
 Therefore, with $s\in\left(-\infty,-\frac{2(2n^2+1)}{4n-1}\right]\cup(2,+\infty)$, we could continue \cref{eq. proof2} as follows:
 \begin{align*}
     & \left(\frac{t}{2}\right)^2
     \int_{M} v^s
     \left(|\nabla^2 v|^2
     +Ric(\nabla v,\nabla v)
     -v^{-1}|\nabla v|^2\Delta v
     \right)\\
     =&\int_M \left|\sqrt{\alpha}
     \left(\nabla^2 u-\frac{\Delta u}{n}g\right)
     +\frac{n}{n+2}\left(\frac{4}{t}-1\right)\frac{1}{\sqrt{\alpha}}
     \left(\frac{\dd u\otimes \dd u}{u}-\frac{1}{n}\frac{|\nabla u|^2}{u}g\right)\right|^2
     \notag\\
     &+\left(
     \frac{n}{n+2}\left(1-\frac{4}{t}\right)
     -\frac{n(n-1)}{(n+2)^2}\left(\frac{4}{t}-1\right)^2\frac{1}{\alpha}
     \right)
     \int_M u^{-2}|\nabla u|^4\notag
     \\
     &+\left(1-\frac{n-1}{n}\alpha\right)
     \int_M (\Delta u)^2
     +\alpha \int_M Ric(\nabla u,\nabla u)\\
     \geq& \int_M \left(1-\frac{n-1}{n}\alpha\right)\lambda_1|\nabla u|^2
     +(n-1)\alpha\int_M |\nabla u|^2\\
     =&\left(\left(1-\frac{(n-1)^2(s-2)}{n(n+2)(s+2)}
        \right)\lambda_1
        +\frac{(n-1)^2(s-2)}{(n+2)(s+2)}
        \right)
        \left(\frac{t}{2}\right)^2
        \int_M v^s|\nabla v|^2.
 \end{align*}
This yields the desired inequality.
 The case for $s=2$ could be obtained by taking the limit $s\to 2^+$ in \cref{ineq. interpolated BE Gamma2}. 
\end{proof}
\begin{remark}\label{rmk. counterexample}
     For $s\in \left(-\frac{2(2n^2+1)}{4n-1},-2\right)$, there exists some function $0<v\in C^{\infty}(\mathbb{S}^n)$ such that
    \cref{ineq. interpolated BE Gamma2} fails to hold.
    We give a brief explanation as follows.
    
    For $s\in \left(-\frac{2(2n^2+1)}{4n-1},-2\right)$, we have $t:=s+2\in \left(\frac{-4(n-1)^2}{4n-1},0\right)$ and $\alpha:=\frac{n-1}{n+2}\left(1-\frac{4}{t}\right)>\frac{n}{n-1}$. Consider $u(x):=(2+\cos r(x))^{1-n}$ and $v:=u^{\frac{2}{t}}$. Then it is straightforward to verify that the first square term in the right hand side of \cref{eq. proof2} vanishes for this $u$, and 
    \begin{align*}
        \int_{\mathbb{S}^n}
     \left(1-\frac{n-1}{n}\alpha\right)(\Delta u)^2
     <\left(1-\frac{n-1}{n}\alpha\right)\lambda_1(\mathbb{S}^n)
     \int_{\mathbb{S}^n}|\nabla u|^2.
    \end{align*}
    Inserting these into \cref{eq. proof2} shows that $v$ fails to satisfy \cref{ineq. interpolated BE Gamma2}.

    However, for $s\in[-2,2)$, it is unknown whether \cref{ineq. interpolated BE Gamma2} holds even for the model space $\mathbb{S}^n$.
 \end{remark}
\section{Monotonicity of the Tsallis entropy }\label{sec. corollaries}
Consider the Tsallis entropy defined for $p\in(0,1)\cup(1,+\infty)$ and $0<u\in C^{\infty}(M)$,
\begin{align}\label{eq. Tsallis entropy}
    T_p(u):=\frac{\int_M u^p-\int_M u}{1-p}.
\end{align}
To get \cref{thm. ODE of T entropy}, we just need to calculate the expressions for the derivatives of the Tsallis entropy under the heat flow.

\begin{lemma}\label{lem. evolution of U}
Let $(M^n,g)$ be a compact manifold.
    Consider the heat flow $\partial_t u=\Delta u$ with the initial value $u_0>0$ and let $U:=u^{\frac{p-1}{2}}$. There hold
    \begin{align*}
    \nabla u=&\frac{2}{p-1}uU^{-1}\nabla U,\\
        \partial_t U
        =&\Delta U
        +\left(\frac{2}{p-1}-1\right)U^{-1}|\nabla U|^2.
    \end{align*}
\end{lemma}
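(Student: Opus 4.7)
The proof is a direct computation based on the substitution $u = U^{2/(p-1)}$, which is the inverse of $U := u^{(p-1)/2}$. The plan is to derive the first identity by applying the chain rule to the spatial gradient, and then derive the second identity by combining the temporal chain rule with the heat equation.

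First, I would write $u = U^{2/(p-1)}$ and compute
\begin{align*}
\nabla u = \frac{2}{p-1} U^{\frac{2}{p-1}-1}\nabla U = \frac{2}{p-1}\, U^{\frac{2}{p-1}} U^{-1}\nabla U = \frac{2}{p-1}\, u\, U^{-1}\nabla U,
\end{align*}
which gives the first claim.

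For the second identity, I would set $\beta := 2/(p-1)$ for convenience, so that $u = U^{\beta}$ and $U = u^{1/\beta}$. The heat equation gives
\begin{align*}
\partial_t U = \frac{1}{\beta}\, u^{\frac{1}{\beta}-1}\,\partial_t u = \frac{1}{\beta}\, u^{\frac{1}{\beta}-1}\,\Delta u.
\end{align*}
A direct computation of the divergence yields
\begin{align*}
\Delta u = \mathrm{div}\bigl(\beta U^{\beta-1}\nabla U\bigr) = \beta(\beta-1)\, U^{\beta-2}|\nabla U|^2 + \beta\, U^{\beta-1}\,\Delta U.
\end{align*}
Substituting and using the identities $u^{\frac{1}{\beta}-1}U^{\beta-1}=1$ and $u^{\frac{1}{\beta}-1}U^{\beta-2}=U^{-1}$ (which follow from $U=u^{1/\beta}$), one obtains
\begin{align*}
\partial_t U = \Delta U + (\beta-1)\,U^{-1}|\nabla U|^2 = \Delta U + \left(\frac{2}{p-1}-1\right)U^{-1}|\nabla U|^2,
\end{align*}
as desired.

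There is no real obstacle here: the statement is purely a change-of-variables exercise. The only point requiring minor care is keeping track of the exponents so that the simplification $u^{\frac{1}{\beta}-1}U^{\beta-1}=1$ is carried out correctly; once this bookkeeping is done, both identities drop out immediately.
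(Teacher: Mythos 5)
Your proof is correct and follows essentially the same route as the paper: both are direct chain-rule computations, the only cosmetic difference being that you expand $\Delta u = \mathrm{div}(\beta U^{\beta-1}\nabla U)$ in terms of powers of $U$ while the paper expands $\mathrm{div}(uU^{-1}\nabla U)$ and then substitutes the first identity. The exponent bookkeeping ($u^{\frac{1}{\beta}-1}U^{\beta-1}=1$, $u^{\frac{1}{\beta}-1}U^{\beta-2}=U^{-1}$) checks out.
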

\begin{proof}
Note that $\log U=\frac{p-1}{2}\log u$, we get
    \begin{align*}
        U^{-1}\nabla U=\frac{p-1}{2}u^{-1}\nabla u.
    \end{align*}
    Therefore we could express $\Delta u$ as
    \begin{align*}
        \Delta u
        =&\frac{2}{p-1}div(uU^{-1}\nabla U)\\
        =&\frac{2}{p-1}
        (uU^{-1}\Delta U
        +\langle\nabla u,\nabla U\rangle U^{-1}
        -uU^{-2}|\nabla U|^2
        )\\
        =&\frac{2}{p-1}uU^{-1}
        \left(
        \Delta U
        +\left(\frac{2}{p-1}-1\right)U^{-1}|\nabla U|^2
        \right).
    \end{align*}
    Hence we get 
    \begin{align*}
        \partial_t U
        =\frac{p-1}{2}Uu^{-1}\Delta u
        =\Delta U
        +\left(\frac{2}{p-1}-1\right)U^{-1}|\nabla U|^2.
    \end{align*}
\end{proof}

\begin{proposition}\label{prop. two derivatives of Tsallis entropy}
    Under the same assumptions and notations in \cref{lem. evolution of U}, we have
    \begin{align*}
        \frac{\dd}{\dd t}T_p(u)
        =&p\int_M u^{p-2}|\nabla u|^2
        =p\left(\frac{2}{p-1}\right)^2\int_M u|\nabla U|^2,\\
        \frac{\dd^2}{\dd t^2}T_p(u)
        =&-2p\left(\frac{2}{p-1}\right)^2
        \int_M u
        \left(
        |\nabla^2 U|^2+Ric(\nabla U,\nabla U)
        -U^{-1}|\nabla U|^2\Delta U
        \right).
    \end{align*}
\end{proposition}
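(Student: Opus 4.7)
The plan is to differentiate $T_p(u) = (\int_M u^p - \int_M u)/(1-p)$ twice and then translate the result into $U$-variables via \cref{lem. evolution of U}. For the first derivative, the heat flow preserves mass, so $\partial_t \int_M u = \int_M \Delta u = 0$ and only $\int_M u^p$ contributes:
\[\frac{\dd}{\dd t}T_p(u) = \frac{p}{1-p}\int_M u^{p-1}\Delta u.\]
A single integration by parts gives $\int_M u^{p-1}\Delta u = -(p-1)\int_M u^{p-2}|\nabla u|^2$, which yields the first stated identity. The equivalent $U$-form is immediate from $\nabla u = \tfrac{2}{p-1}u U^{-1}\nabla U$, which rewrites $u^{p-2}|\nabla u|^2$ as $\tfrac{4}{(p-1)^2}\,u|\nabla U|^2$.

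For the second derivative I would differentiate $J(t) := \int_M u|\nabla U|^2$ rather than the $u$-side, using the heat equation for $u$ together with $\partial_t U = \Delta U + (\tfrac{2}{p-1}-1)U^{-1}|\nabla U|^2$ from \cref{lem. evolution of U}. This produces three pieces:
\[J'(t) = \int_M (\Delta u)|\nabla U|^2 + 2\int_M u\langle\nabla U,\nabla\Delta U\rangle + 2\Bigl(\tfrac{2}{p-1}-1\Bigr)\int_M u\bigl\langle\nabla U,\nabla(U^{-1}|\nabla U|^2)\bigr\rangle.\]
The middle term is handled by Bochner's formula $\langle\nabla U,\nabla\Delta U\rangle = \tfrac12\Delta|\nabla U|^2 - |\nabla^2 U|^2 - \mathrm{Ric}(\nabla U,\nabla U)$, and integrating $\int u\Delta|\nabla U|^2$ by parts twice returns a second copy of $\int(\Delta u)|\nabla U|^2$. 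This part already supplies the $-2\int u|\nabla^2 U|^2$ and $-2\int u\,\mathrm{Ric}(\nabla U,\nabla U)$ contributions of the target expression.

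What remains are the auxiliary terms obtained by expanding $\Delta u$ in $U$-variables via \cref{lem. evolution of U} and using $\nabla(U^{-1}|\nabla U|^2) = -U^{-2}|\nabla U|^2\nabla U + U^{-1}\nabla|\nabla U|^2$. These reduce to a linear combination of the three integrals $\int uU^{-1}|\nabla U|^2\Delta U$, $\int uU^{-2}|\nabla U|^4$ and $\int uU^{-1}\nabla^2 U(\nabla U,\nabla U)$. The main obstacle — and the only algebraically subtle step — is that the latter two have to be folded into a scalar multiple of the first with the correct coefficient. The key is to integrate $\int_M uU^{-1}|\nabla U|^2\Delta U$ by parts once, using $\langle\nabla u,\nabla U\rangle = \tfrac{2}{p-1}uU^{-1}|\nabla U|^2$; this produces precisely the linear identity
\[\int_M uU^{-1}|\nabla U|^2\Delta U + \Bigl(\tfrac{2}{p-1}-1\Bigr)\int_M uU^{-2}|\nabla U|^4 + 2\int_M uU^{-1}\nabla^2 U(\nabla U,\nabla U) = 0.\]
Substituting this identity collapses the auxiliary contributions to $2\int uU^{-1}|\nabla U|^2\Delta U$, and multiplication by $p\bigl(\tfrac{2}{p-1}\bigr)^2$ delivers the stated formula for $\frac{\dd^2}{\dd t^2}T_p(u)$.
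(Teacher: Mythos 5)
Your proposal is correct and follows essentially the same route as the paper: both differentiate $\int_M u|\nabla U|^2$ using the evolution equation for $U$, apply Bochner's formula, and collapse the auxiliary cubic/quartic gradient terms via an integration by parts based on $\langle\nabla u,\nabla U\rangle=\tfrac{2}{p-1}uU^{-1}|\nabla U|^2$. Your key identity $\int_M uU^{-1}|\nabla U|^2\Delta U+\bigl(\tfrac{2}{p-1}-1\bigr)\int_M uU^{-2}|\nabla U|^4+2\int_M uU^{-1}\nabla^2U(\nabla U,\nabla U)=0$ is exactly the paper's \cref{eq. first ugly term} combined with \cref{eq. second ugly term}, only expressed through the Hessian term rather than $\int_M u\Delta|\nabla U|^2$.
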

\begin{proof}
First, the derivative of the $L^1$ norm of $u$ vanishes:
    \begin{align*}
        \partial_t\int_M u=\int_M \Delta u=0.
    \end{align*}
    For the $L^p$ norm term, by the heat equation and \cref{lem. evolution of U} we have
    \begin{align*}
        \partial_t\int_M u^p
        =&p\int_M u^{p-1}\Delta u
        =p(1-p)\int_M u^{p-2}|\nabla u|^2
        =p(1-p)
        \left(\frac{2}{p-1}\right)^2\int_M u^pU^{-2}|\nabla U|^2\\
        =&p(1-p)\left(\frac{2}{p-1}\right)^2\int_M u|\nabla U|^2.
    \end{align*}
    Therefore, we could get the first order derivative of $T_p(u)$.
    
    For the second order derivative of $T_p(u)$, by \cref{lem. evolution of U} we have
    \begin{align}\label{eq. intermediate step second deri}
        \partial_t\int_M u|\nabla U|^2
        =&\int_M |\nabla U|^2\Delta u
        +2\int_M u\langle\nabla \partial_tU,\nabla U\rangle\notag\\
        =&\int_M u\Delta |\nabla U|^2
        +2\int_M u\langle\nabla \Delta U,\nabla U\rangle\notag\\
        &+2\left(1-\frac{2}{p-1}\right)\int_M uU^{-2}|\nabla U|^4
        +2\left(\frac{2}{p-1}-1\right)\int_M uU^{-1}\langle\nabla|\nabla U|^2,\nabla U\rangle.
    \end{align}
    By \cref{lem. evolution of U} and the divergence theorem, we tackle the second term in the last line of \cref{eq. intermediate step second deri} as
    \begin{align}\label{eq. second ugly term}
        \int_M uU^{-1}\langle\nabla |\nabla U|^2,\nabla U\rangle
        =&\frac{p-1}{2}\int_M \langle\nabla|\nabla U|^2,\nabla u\rangle
        =-\frac{p-1}{2}\int_M u\Delta |\nabla U|^2
    \end{align}
    To tackle the first term in the last line of \cref{eq. intermediate step second deri},  we notice that by \cref{lem. evolution of U} and \cref{eq. second ugly term},
    \begin{align*}
        \int_M uU^{-1}|\nabla U|^2\Delta U
        =&-\int_M \langle\nabla u,\nabla U\rangle U^{-1}|\nabla U|^2
        +\int_M uU^{-2}|\nabla U|^4
        -\int_M uU^{-1}\langle\nabla |\nabla U|^2,\nabla U\rangle\\
        =&\left(1-\frac{2}{p-1}\right)\int_M uU^{-2}|\nabla U|^4
        +\frac{p-1}{2}\int_M u\Delta|\nabla U|^2.
    \end{align*}
    This is equivalent to
    \begin{align}\label{eq. first ugly term}
        2\left(1-\frac{2}{p-1}\right)\int_M uU^{-2}|\nabla U|^4
        =2\int_M uU^{-1}|\nabla U|^2\Delta U
        -(p-1)\int_M u\Delta |\nabla U|^2.
    \end{align}
    Inserting \cref{eq. first ugly term} and \cref{eq. second ugly term} into \cref{eq. intermediate step second deri}, we conclude by Bochnner formula that
    \begin{align*}
        \partial_t\int_M u|\nabla U|^2
        =&-\int_M u\Delta |\nabla U|^2
        +2\int_M u\langle\nabla \Delta U,\nabla U\rangle
        +2\int_M uU^{-1}|\nabla U|^2\Delta U\\
        =&-2\int_M u(|\nabla^2 U|^2+Ric(\nabla U,\nabla U))
        +2\int_M uU^{-1}|\nabla U|^2\Delta U.
    \end{align*}
    This gives the second order derivative of $T_p(u)$.
\end{proof}
Now \cref{thm. ODE of T entropy} would be a consequence of \cref{thm. modified weighted Gamma2} and 
\cref{prop. two derivatives of Tsallis entropy}.
\begin{proof}[Proof of \cref{thm. ODE of T entropy}:]
    For $p\in\left[\frac{2(n-1)^2}{2n^2+1},1\right)\cup(1,2]$, we take
    \begin{align*}
        s:=\frac{2}{p-1}\in\left(-\infty,-\frac{2(2n^2+1)}{4n-1}\right]\cup[2,+\infty).
    \end{align*}
    Use the notation $U:=u^{\frac{p-1}{2}}$ as in \cref{lem. evolution of U}. By \cref{prop. two derivatives of Tsallis entropy} and \cref{thm. modified weighted Gamma2}, we get
    \begin{align*}
        &-\frac{1}{2p}\left(\frac{p-1}{2}\right)^2
        \frac{\dd^2}{\dd t^2}T_p(u)
        =\int_M U^{\frac{2}{p-1}}(|\nabla^2 U|^2+Ric(\nabla U,\nabla U)-U^{-1}|\nabla U|^2\Delta U)\\
        \geq& 
        \left(\left(
        1-\frac{(n-1)^2}{n(n+2)}\left(\frac{2}{p}-1\right)
        \right)\lambda_1
        +\frac{(n-1)^2}{n+2}\left(\frac{2}{p}-1\right)
        \right)
        \int_M U^{\frac{2}{p-1}}|\nabla U|^2\\
        =&\left(\left(
        1-\frac{(n-1)^2}{n(n+2)}\left(\frac{2}{p}-1\right)
        \right)\lambda_1
        +\frac{(n-1)^2}{n+2}\left(\frac{2}{p}-1\right)
        \right)
        \frac{1}{p}
        \left(\frac{p-1}{2}\right)^2
        \frac{\dd}{\dd t}T_p(u).
    \end{align*}
\end{proof}
\begin{remark}\label{rmk. counterexample for ODE}
    For $p\in\left(0,\frac{2(n-1)^2}{2n^2+1}\right)$ and $(M^n,g)=\mathbb{S}^n$, there exists a positive function $u_0$ such that  \cref{ineq. in thm ODE of T entropy} fails for the heat flow with the initial value $u_0$  at $t=0$. In fact, this is a direct consequence of \cref{rmk. counterexample} by noting that $s:=\frac{2}{p-1}\in
        \left(-\frac{2(2n^2+1)}{4n-1},-2\right)$. 
    However, we do not know whether \cref{ineq. in thm ODE of T entropy} holds for $p< 0$ or $p>2$ even on the model space $\mathbb{S}^n$.
\end{remark}

\begin{proof}[Proof of \cref{cor. some Sobolev inequalities}:]
Let $p:=\frac{2}{q}\in\left[\frac{2(n-1)^2}{2n^2+1},1\right)\cup(1,2]$.
Given $0<v\in C^{\infty}(M)$, define $u_0:=v^q=v^{\frac{2}{p}}$. Consider the heat flow
    \begin{align*}
        \begin{cases}
            \partial_tu=\Delta u\\
            u(0,\cdot)=u_0.
        \end{cases}
    \end{align*}
    Note that
    \begin{align*}
        \lim_{t\to+\infty}u(t,\cdot)=\fint_M u_0.
    \end{align*}
    Then we use \cref{thm. ODE of T entropy} and integrate for $t$ over $[0,+\infty)$ and get
    \begin{align*}
        0
        \geq&0-p\int_M u_0^{p-2}|\nabla u_0|^2
        +2\left(\left(
        1-\frac{(n-1)^2}{n(n+2)}\left(\frac{2}{p}-1\right)
        \right)\lambda_1
        +\frac{(n-1)^2}{n+2}\left(\frac{2}{p}-1\right)
        \right)\times\\
        &\quad\quad\quad\quad\quad\quad\quad\quad\left(
        \frac{Vol(M)(\fint_Mu_0)^p-Vol(M)\fint_M u_0}{1-p}
        -\frac{\int_M u_0^p-\int_M u_0}{1-p}
        \right)
    \end{align*}
    Rearranging it, we get
    \begin{align*}
        &p\fint_M u_0^{p-2}|\nabla u_0|^2\\
        \geq&\frac{2}{1-p}
        \left(\left(
        1-\frac{(n-1)^2}{n(n+2)}\left(\frac{2}{p}-1\right)
        \right)\lambda_1
        +\frac{(n-1)^2}{n+2}\left(\frac{2}{p}-1\right)
        \right)
        \left(
        \left(\fint_M u_0\right)^p-\fint_M u_0^p
        \right).
    \end{align*}
    Finally, substitute $u_0=v^{\frac{2}{p}}$ and $p=\frac{2}{q}$ to conclude that
    \begin{align*}
        2q\fint_M|\nabla v|^2
        \geq
        \frac{2q}{q-2}
        \left(\left(
        1-\frac{(n-1)^2}{n(n+2)}(q-1)
        \right)\lambda_1
        +\frac{(n-1)^2}{n+2}(q-1)
        \right)
        \left(
        \left(\fint_M v^q\right)^{\frac{2}{q}}
        -\fint_M v^2
        \right).
    \end{align*}
\end{proof}
We conclude this paper with several remarks.

We note that the Sobolev constant in \cref{cor. some Sobolev inequalities} is sharp in the sense that it is the largest possible one for the model manifold $\mathbb{S}^n$. However, it is slightly weaker than the Sobolev constant in \cref{thm. DEL14} for general manifolds with $Ric\geq n-1$, since $\lambda_1\geq n$. The same phenomenon occurs for the logarithmic Sobolev inequality: the lower bound of $\alpha_M$ given by \cref{thm. Ji} together with the Bakry-\'Emery criterion \cref{ineq. Bakry Emery criterion} is slightly weaker than that obtained by Rothaus \cref{ineq. Rothaus}. Therefore, it is natural to wonder whether the lower bound on $\Lambda _M$
in \cref{thm. Ji} can be improved to match that in \cref{ineq. Rothaus}. Similarly, it would also be valuable to refine the coefficient in \cref{ineq. interpolated BE Gamma2} so that it yields a Sobolev inequality with the same Sobolev constant as in \cref{thm. DEL14}.

\cref{thm. ODE of T entropy} and \cref{cor. some Sobolev inequalities} unveil that the Tsallis entropy exhibits certain monotonicity under the heat flow, leading to sharp Sobolev inequalities with exponent $q\in [1,2)\cup\left(2,\frac{2n^2+1}{(n-1)^2}\right]$. 
However, as we mentioned in \cref{rmk. counterexample for ODE}, \cref{thm. ODE of T entropy} fails for $p\in(0,\frac{2(n-1)^2}{2n^2+1})$.
It would be of interest to determine whether other monotonicity properties of Tsallis entropy can be established so that the Sobolev inequality for the full range of the Sobolev exponent $q\in (2,\frac{2n}{n-2}]$ could be recovered. 

Finally, we notice that \cref{ineq. in thm ODE of T entropy} is equivalent to
\begin{align*}
    \frac{\dd}{\dd t}\left(
    C(n,p)t+
    \log \left(
    \frac{\dd}{\dd t}T_p(u)
    \right)
    \right)\leq 0,
\end{align*}
where $C(n,p):=2\left(\left(
        1-\frac{(n-1)^2}{n(n+2)}\left(\frac{2}{p}-1\right)
        \right)\lambda_1
        +\frac{(n-1)^2}{n+2}\left(\frac{2}{p}-1\right)
        \right)$. This combined with \cref{prop. two derivatives of Tsallis entropy} yields a decay rate upper bound of a weighted Dirichlet energy under the heat flow:
        \begin{align*}
            \int_M u|\nabla U|^2\leq e^{-C(n,p)t}\int_M u_0|\nabla U_0|^2,\quad \forall t\geq 0,
        \end{align*}
        where $\partial_tu=\Delta u$ is the heat flow with a positive initial data $u_0$, and $U:=u^{\frac{p-1}{2}}$.
        On the other hand, by considering the $L^2$ spectral decomposition of the solution $u$ and the fact that $u$ converges to a positive constant function as $t\to+\infty$, we know that  the decay rate has an asymptotic  lower bound in terms of $\lambda_1$: 
        \begin{align*}
            Ce^{-\lambda_1t}\leq\int_M u|\nabla U|^2,\quad t\gg 1,
        \end{align*}
        where $C$ is a positive constant depending only on $u_0$.
        This viewpoint indicates some restrictions on the constant appearing in  \cref{ineq. interpolated BE Gamma2} and 
        \cref{ineq. in thm ODE of T entropy}.

\bibliographystyle{amsplain}
\bibliography{references}
\end{document}